\theoremstyle{plain}
\newtheorem{theorem}{Theorem}[section]
\newtheorem{lemma}[theorem]{Lemma}
\newtheorem{proposition}[theorem]{Proposition}
\theoremstyle{remark}
\newtheorem*{rems}{Remark}
\DeclareMathOperator{\Dom}{Dom}
\DeclareMathOperator{\Real}{Re}
\DeclareMathOperator{\supp}{supp}
\DeclareMathOperator{\spec}{spec}
\newcommand{\loc}{\mathrm{loc}}
\newcommand{\rd}{\mathrm{d}}
\newcommand{\re}{\mathrm{e}}
\newcommand{\ri}{\mathrm{i}}
\newcommand{\R}{\mathbb{R}}
\newcommand{\N}{\mathbb{N}}
\newcommand{\C}{\mathbb{C}}
\newcommand{\Z}{\mathbb{Z}}
\renewcommand{\epsilon}{\varepsilon}
\newcommand{\ipd}[2]{\langle{#1},{#2}\rangle}
\newcommand{\bigipd}[2]{\bigl\langle{#1},\hspace{0.1em}{#2}\bigr\rangle}
\newcommand{\abs}[1]{\lvert{#1}\rvert}
\newcommand{\bigabs}[1]{\bigl\lvert{#1}\bigr\rvert}
\newcommand{\norm}[1]{\lVert{#1}\rVert}
\newcommand{\bignorm}[1]{\bigl\lVert{#1}\bigr\rVert}
\newcommand{\bk}[1]{\langle{#1}\rangle}
\newcommand{\wt}[1]{\widetilde{#1}}
\newcommand{\Ereg}{\Omega_{R}}
\newcommand{\halfsp}[1][d]{\R^{#1}_+}
\newcommand{\Rlattice}{\Gamma}
\newcommand{\RDlattice}{\Gamma^{\dagger}}
\newcommand{\Rucell}{\mathcal{O}}
\newcommand{\eRucell}{\mathcal{P}}
\newcommand{\RDucell}{\mathcal{O}^{\dagger}}
\newcommand{\tcell}[2]{\mathcal{O}_{#1,#2}}
\newcommand{\etcell}[2]{\mathcal{P}_{#1,#2}}
\newcommand{\bde}{\mathbf{e}}
\newcommand{\bdf}{\mathbf{f}}
\newcommand{\torus}{\mathbb{T}}
\newcommand{\BFt}[1]{\ifthenelse{\equal{#1}{}}{\mathcal U}{(\mathcal U#1)_{\theta}}}
\newcommand{\Sch}[1][V]{\ifthenelse{\equal{#1}{}}{-\Delta}{-\Delta+{#1}}}
\newcommand{\Tlap}[1][\theta]{H_{#1}}
\newcommand{\OpA}{A}
\newcommand{\OpB}[1][t]{B_{#1}}
\newcommand{\dt}{\nabla_t}
\newcommand{\OpH}{\dt^2-\OpA}
\newcommand{\OpN}{N}
\newcommand{\OpAw}{A_{\ftew}}
\newcommand{\OpL}{L}
\newcommand{\OpBM}[1]{\mathbf{M}_{#1}}
\newcommand{\proj}[1]{P_{#1}}
\newcommand{\Qroj}[1]{\mathbf{Q}_{#1}}
\newcommand{\hs}[1][]{X_{#1}}
\newcommand{\HS}[1][]{\mathbf{X}_{#1}}
\newcommand{\fs}[2][\loc]{\ifthenelse{\equal{#1}{\loc}}{\mathcal{X}_{\loc}^{#2}}{\mathcal{X}_{#1}^{#2}}}
\newcommand{\tfn}[3][\theta]{\ifthenelse{\equal{#2}{}}{\phi_{#1}}{\phi_{#1}(#3,#2)}}
\newcommand{\ctf}{h}
\newcommand{\remRphi}{\rho(R)}
\newcommand{\blsup}{\beta} 
\newcommand{\Ftew}{\Omega}
\newcommand{\ftew}{\omega}
\newcommand{\bPhi}{\mathbf{\Phi}}
\newcommand{\bPsi}{\mathbf{\Psi}}
\newcommand{\glb}[1][]{\mu_{#1}}
\newcommand{\gub}[1][]{\nu_{#1}}
\newcommand{\glen}[1][]{\gamma_{#1}}
\newcommand{\gcen}[1][]{\lambda_{#1}}
\newcommand{\iqf}{\mathbf{q}}
\newcommand{\Viqf}{Q}
\begin{document}

\title{Decay rates at infinity for solutions to periodic Schr\"{o}dinger equations}
\author{Daniel M.~Elton}

\maketitle

\begin{abstract}
We consider the equation $\Delta u=Vu$ in the half-space $\halfsp$, $d\ge2$
where $V$ has certain periodicity properties. 
In particular we show that such equations cannot have non-trivial 
superexponentially decaying solutions. As an application this leads 
to a new proof for the absolute continuity of the spectrum of particular
periodic Schr\"{o}dinger operators. 
The equation $\Delta u=Vu$ is studied as part of a broader class 
of elliptic evolution equations.
\end{abstract}

\section{Introduction}

We are interested in the possible decay rate of (distributional) solutions to the equation
\begin{equation}
\label{eq:basic}
(\Sch)u=Eu
\end{equation}
where $\Delta$ is the Laplace operator on $\R^d$, $V$ is a measurable function
and $E$ is a constant.
Landis (see \cite{KL}) asked if the boundedness of $V$ is sufficient to exclude 
superexponentially decaying solutions.
More precisely, suppose $V$ is bounded and $u$ solves \eqref{eq:basic} in the exterior region
$\Ereg=\{x\in\R^d:\abs{x}>R\}$, $R>0$, 
while $\re^{\lambda\abs{x}}u$ is bounded on $\Ereg$ for all $\lambda>0$;
does it follow that $u\equiv0$ on $\Ereg$?

Viewing $E$ as a spectral parameter \eqref{eq:basic} is the spectral equation for the Schr\"odinger operator with potential $V$.
In this context Simon (\cite{S}) posed a related question about superexponentially decaying solutions; 
in particular, if $V$ is such that $\Sch$  
defines a self-adjoint operator with a non-compact resolvent does any non-trivial solution of \eqref{eq:basic}
satisfy $\re^{\lambda\abs{x}}u\notin L^2$ for some $\lambda>0$? Note that, $V$ must be real-valued for $\Sch$ to be self-adjoint,
while $\Sch$ has a non-compact resolvent for any bounded $V$.

If one considers complex-valued $V$ the answer to Landis' question is negative. 
In particular, given $\epsilon\in[0,1/2)$ there exists a continuous complex-valued $V$ on $\R^2$ 
and non-trivial $u\in C^2(\R^2)$ with $V(x)=O(\abs{x}^{-\epsilon})$ as $\abs{x}\to\infty$, $\Delta u=Vu$ on $\R^2$, 
and $\re^{\lambda\abs{x}^{(4-2\epsilon)/3}}u\in L^\infty$ for some $\lambda>0$; 
see \cite{M} for the case $\epsilon=0$ and \cite{C-S} for the generalisation to $\epsilon>0$.

On the other hand Landis' question is known to have a positive answer when $d=1$ 
(essentially a classical result for ordinary differential equations), when $d=2$, $E=0$ and $V\ge0$ (\cite{KLW}), 
and for any $E\in\R$ and $V$ with $V(x)=O(\abs{x}^{-1/2})$ as $\abs{x}\to\infty$ (\cite{FHHO1,M2}).
For $E\in\R$ and bounded real-valued $V$, superexponentially decaying solutions of \eqref{eq:basic} 
can also be excluded under some conditions which stabilise $V(x)$ for large $x$; in particular this holds when
(the distributional derivative) $(x.\nabla)V$ is also bounded (\cite{BM,FHHO1}).
A complete answer to Landis' question for real-valued potentials, or to the more general question of Simon,
remains unknown.

In the present work we consider functions $V$ which are periodic transverse to a given direction. 
This naturally favours working on a half-space; 
since $\Ereg$ includes a translated copy of any half-space our results will also apply to exterior regions.
Let $\R_+=(0,+\infty)$ and $\halfsp=\R^{d-1}\times\R_+$. 
For $x\in\halfsp$ we will use the notation $x=(\wt{x},t)$ where $\wt{x}=(x_1,\dots,x_{d-1})\in\R^{d-1}$ and $t=x_d>0$. 
Also set $\bk{\wt{x}}=(1+\abs{\wt{x}}^2)^{1/2}$.
We obtain the following.

\begin{theorem}
\label{thm:mainresper}
Suppose $V\in L^\infty(\halfsp)$ is periodic with respect to a lattice $\Rlattice\subset\R^{d-1}$
and $E\in\R$.
Let $u\in L^2_\loc$ be a (distributional) solution of \eqref{eq:basic} on $\halfsp$ which satisfies
\begin{equation}
\label{eq:hypestmainresper}
\int_{\halfsp}\bk{\wt{x}}^{2\kappa}\re^{2\lambda t}\abs{u(x)}^2\rd\wt{x}\,\rd t<+\infty
\end{equation}
for all $\lambda>0$ and some $\kappa$. Suppose we also have (at least) one of the following:
\begin{itemize}
\item[\upshape (i)]
$d=2$ and $\kappa\ge0$.
\item[\upshape (ii)]
$d=3$, $\Rlattice$ is rational and $\kappa>1$.
\item[\upshape (iii)]
$d\ge2$, $\Rlattice$ is rational, $\kappa>(d-1)/2$ and 
$\norm{V(\cdot,t)}_{L^\infty(\R^{d-1})}\to0$ as $t\to+\infty$.
\end{itemize}
Then $u\equiv0$ on $\halfsp$.  
\end{theorem}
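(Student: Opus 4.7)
The periodicity of $V$ in $\wt x$ invites a reduction to a cylinder by Floquet--Bloch decomposition in the transverse direction; combined with the paper's uniqueness theory for elliptic evolution equations, this should force $u\equiv 0$ fiber by fiber.

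First I would apply the Floquet transform $\BFt{}$ associated to $\Rlattice$. Since $V(\cdot,t)$ is $\Rlattice$-periodic, the equation \eqref{eq:basic} decouples over the dual cell $\RDucell$: writing $u_\theta(\cdot,t) := (\BFt{}u)(\cdot,t,\theta)$ as a function on the fundamental cell $\Rucell$ with $\theta$-quasi-periodic lateral boundary conditions, we obtain an elliptic evolution equation on the cylinder $\Rucell\times\R_+$,
\begin{equation*}
\partial_t^2 u_\theta(\cdot,t) \;=\; B_\theta(t)\,u_\theta(\cdot,t),
\qquad
B_\theta(t) \,:=\, \bigl(-\Delta_{\wt x}\bigr)_{\theta} + V(\cdot,t) - E,
\end{equation*}
where $(-\Delta_{\wt x})_\theta$ is the $\theta$-quasi-periodic Laplacian on $\Rucell$. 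The decay hypothesis \eqref{eq:hypestmainresper} then transfers to fibers via Plancherel for $\BFt{}$ and Fubini: for almost every $\theta\in\RDucell$,
\begin{equation*}
\int_0^{+\infty} \re^{2\lambda t}\bignorm{u_\theta(\cdot,t)}_{L^2(\Rucell)}^2\,\rd t < +\infty
\quad\text{for every } \lambda>0.
\end{equation*}
The polynomial weight $\bk{\wt x}^{2\kappa}$ is the regularising ingredient that makes this transfer meaningful pointwise in $\theta$: in cases (ii) and (iii) the threshold $\kappa>(d-1)/2$ is precisely the Sobolev level that gives continuous dependence on $\theta$, while in case (i) with $d=2$ the one-dimensional transverse Floquet variable runs over a compact interval and the $L^2$ hypothesis alone suffices.

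Next I would invoke the unique continuation theorem from infinity for elliptic evolution equations $\partial_t^2 v = B(t)v$ in a Hilbert space --- the principal technical machinery of the paper --- applied to $v=u_\theta$: super-exponential decay in $t$ forces $v\equiv 0$. The standard route is a log-convexity / Hadamard three-lines argument for $N(t)=\bignorm{u_\theta(\cdot,t)}_{L^2(\Rucell)}^2$, possibly supplemented by Carleman-type weighted estimates; here $B_\theta(t)$ is self-adjoint and bounded below on $L^2(\Rucell)$, which is the favourable setting. Fiberwise vanishing together with injectivity of $\BFt{}$ then yields $u\equiv 0$ on $\halfsp$.

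The main obstacle is making the fiberwise uniqueness work uniformly (or at least generically) in $\theta$, and the split (i)--(iii) reflects the auxiliary input required. Rational $\Rlattice$ in (ii) and (iii) arranges the Bloch spectrum of $(-\Delta_{\wt x})_\theta$ into a union of smooth bands with uniformly controlled asymptotics; the vanishing $\norm{V(\cdot,t)}_{L^\infty}\to 0$ in (iii) reduces $B_\theta(t)$ for large $t$ to an essentially constant-coefficient operator whose spectrum is explicit, simplifying the higher-dimensional analysis; case (i) is favoured by the trivially one-dimensional transverse geometry of $d=2$. Handling resonances between $E$ and eigenvalues of $B_\theta(t)$, where log-convexity of $N$ threatens to degenerate, is the heart of the matter and drives the precise form of the hypotheses.
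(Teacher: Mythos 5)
Your first half --- the Gelfand/Floquet reduction to fiber equations $(\dt^2-\Tlap)\tfn{}{}=V_t\tfn{}{}$ on the torus, the transfer of the decay hypothesis to almost every $\theta$ via unitarity and Fubini, and the observation that $\kappa>(d-1)/2$ is exactly what buys continuous dependence on $\theta$ --- matches the paper's Lemma \ref{lem:basicpropphitheta} and the surrounding argument. (A small omission: the transform only yields $u\equiv0$ on $\R^{d-1}\times(1,\infty)$, and an interior unique continuation theorem is still needed to conclude on all of $\halfsp$.)

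The genuine gap is in the uniqueness mechanism on each fiber. A log-convexity argument for $t\mapsto\norm{\tfn{t}{\cdot}}^2$, or any argument relying only on self-adjointness and semi-boundedness of the fiber operator, cannot work: Meshkov's construction (cited in the paper) produces, for $\OpA=-\Delta$ on $\R^{d-1}$, a nontrivial solution of $(\OpH)\phi=\OpB\phi$ with uniformly bounded $\OpB$ that decays like $\re^{-ct^{4/3}}$, so superexponential decay in $t$ alone is compatible with nontriviality in the abstract setting (this is why Theorem \ref{thm:mainresarbAest} is sharp and stops at the exponent $4/3$). Moreover $V$ may be complex-valued, so your $B_\theta(t)$ need not be self-adjoint, and ``resonances with eigenvalues of $B_\theta(t)$'' is not the relevant obstruction. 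What actually forces $\tfn{}{}\equiv0$ is Theorem \ref{thm:mainresgen}, whose hypothesis is a condition on the \emph{gaps} in $\spec(\Tlap)=\{\abs{k+\theta}^2-E:k\in\RDlattice\}$ --- a discrete point set, not a union of bands: one needs either arbitrarily large positive gaps, or infinitely many gaps of a uniform length combined with $\norm{V(\cdot,t)}_{L^\infty}\to0$. This is precisely where rationality enters: for rational $\Rlattice$ and rational $\theta$ the values $\abs{k+\theta}^2$ lie in $\sigma\Z/l^2$, giving uniform gaps in every dimension (hence case (iii)); for $d=3$ one needs Pall's theorem on the density of integers represented by a positive definite binary quadratic form to upgrade this to arbitrarily large gaps (case (ii)); and for $d=2$ the set $\{(mf+\theta)^2-E\}$ has arbitrarily large gaps trivially (case (i)). One then concludes for rational $\theta$ and extends to all $\theta$ by density and the continuity in $\theta$ you already noted. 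Your plan never identifies this spectral-gap input or the number-theoretic ingredient behind it, and without it the fiberwise uniqueness step fails.
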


By periodicity of $V$ with respect to $\Rlattice$ we mean $V(\wt{x}+l,t)=V(\wt{x},t)$ 
for any $(\wt{x},t)\in\halfsp$ and $l\in\Rlattice$.

\begin{rems}
Note that $V$ is allowed to be complex-valued.
In cases (i) and (ii) we can absorb $E$ into $V$; 
it follows that we can allow complex $E$ in these cases.
Also note that the conditions on $V$ are satisfied by any potential which is periodic 
with respect to a lattice on $\R^d$, provided this lattice has a rational rank $d-1$ sublattice.
\end{rems}

\begin{rems}
Exponential decay (namely \eqref{eq:hypestmainresper} for \emph{some} $\lambda>0$) is not sufficient.
For example, 
\[
u(x)=\frac{\re^{\ri(x_1+\ri x_2)}}{(x_1+\ri x_2)+\ri}
\]
defines a harmonic function on $\halfsp[2]$ (so $(\Sch)u=0$ with $V=0$) while
$\re^{\lambda x_2}u\in L^2(\halfsp[2])$
for any $\lambda<1$; however $u\not\equiv0$.
\end{rems}

Theorem \ref{thm:mainresper} and related results on the non-existence of solutions with certain types of decay
can be viewed as unique continuation theorems at infinity for \eqref{eq:basic}. 
The implied lower bounds on the decay rate (possibly in a more quantitative form) have applications to spectral 
questions for Schr\"{o}dinger operators (such as the exclusion of embedded eigenvalues; see \cite{FHHO2} for example). 
For periodic potentials an important link was established in \cite[theorem 4.1.5]{K}; in particular, if $V\in L^\infty(\R^d)$ is 
periodic (with respect to a lattice on $\R^d$) then the self-adjoint operator $\Sch$ has an eigenvalue iff \eqref{eq:basic}
has a super\-exponentially decaying solution. 
As a corollary of theorem \ref{thm:mainresper} we thus obtain a new proof 
for the following particular case of a well known result of Thomas (\cite{T}).

\begin{theorem}
Let $d=2$ or $3$ and suppose $V\in L^\infty(\R^d)$ is real-valued and periodic with respect to 
a lattice on $\R^d$ which contains a rational rank $d-1$ sublattice.
Then the spectrum of the self-adjoint operator $\Sch$ contains no eigenvalues.
\end{theorem}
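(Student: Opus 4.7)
The strategy is to argue by contradiction, reducing to the half-space non-existence result of theorem \ref{thm:mainresper}. Suppose $E$ is an eigenvalue of the self-adjoint operator $\Sch$. The cited result of Kuchment [K, theorem 4.1.5] immediately furnishes a nontrivial distributional solution $u\in L^2_\loc(\R^d)$ of \eqref{eq:basic} with $\re^{\lambda\abs{x}}u\in L^2(\R^d)$ for every $\lambda>0$; it is enough to derive a contradiction from the existence of such a $u$.

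Next I would bring the periodic directions into standard position. Let $W\subset\R^d$ be the hyperplane spanned by the rational rank $d-1$ sublattice, and apply an orthogonal transformation mapping $W$ onto $\R^{d-1}\times\{0\}$. Reading ``rational'' as the rotation-invariant property of admitting a basis with rational Gram matrix (up to an overall scaling), the image $\Rlattice\subset\R^{d-1}$ of the sublattice is rational as required by theorem \ref{thm:mainresper}(ii) in the case $d=3$; for $d=2$ no rationality is needed in theorem \ref{thm:mainresper}(i). In the new coordinates $V$ is periodic in $\wt{x}$ with respect to $\Rlattice$, and $u$ retains its superexponential decay.

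To apply theorem \ref{thm:mainresper} on the half-space $\halfsp$, I would pick $\kappa=0$ when $d=2$ and any fixed $\kappa>1$ when $d=3$. Using $\bk{\wt{x}}^{2\kappa}\le C_\kappa\re^{\abs{\wt{x}}}$ together with $\abs{\wt{x}}+2\lambda t\le C_\lambda\abs{x}$, one obtains $\bk{\wt{x}}^{2\kappa}\re^{2\lambda t}\le C\re^{\lambda'\abs{x}}$ on $\halfsp$ for a suitable $\lambda'>0$ depending on $\lambda$ and $\kappa$. Since $\re^{\lambda'\abs{x}}u\in L^2(\R^d)$ by the Kuchment step, hypothesis \eqref{eq:hypestmainresper} is satisfied for every $\lambda>0$, and theorem \ref{thm:mainresper} (part (i) for $d=2$, part (ii) for $d=3$) yields $u\equiv0$ on $\halfsp$.

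Finally, $u$ then vanishes on an open set while satisfying a uniformly elliptic equation with bounded coefficients, so the standard weak unique continuation principle forces $u\equiv0$ on all of $\R^d$, contradicting the nontriviality of $u$ from the Kuchment step. All the real work is carried by theorem \ref{thm:mainresper}; the only mild points needing care are the invocation of Kuchment's reduction to superexponentially decaying distributional solutions and the check that the coordinate rotation preserves the rationality of the transverse sublattice, and I do not anticipate any obstacles beyond these bookkeeping items.
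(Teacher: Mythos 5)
Your proposal is correct and follows essentially the same route as the paper, which presents this theorem precisely as a corollary of theorem \ref{thm:mainresper} combined with Kuchment's equivalence between eigenvalues and superexponentially decaying solutions; the paper leaves the details (rotation of coordinates, verification of \eqref{eq:hypestmainresper}, final unique continuation step) implicit, and you have filled them in accurately.
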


Existing proofs of this result and its many generalisations make use of Bloch (or Floquet) analysis
and the analytic extension of the resulting operators into complex values of the quasi-momentum.

\medskip

Theorem \ref{thm:mainresper} is obtained as a special case of a more general result which we now describe.
Let $\OpA$ be a lower semi-bounded self-adjoint operator on a Hilbert space $\hs$. 
For $j=0,1,2$ set $\hs[j]=\Dom(\abs{\OpA}^{j/2})\subseteq\hs$, so
$\hs[2]=\Dom(\OpA)$, $\hs[1]$ is the form domain of $\OpA$ and $\hs[0]=\hs$.
For $j=1,2$ we can make $\hs[j]$ into a Hilbert space using the isomorphism $\abs{A}^{j/2}+I:\hs[j]\to\hs$. 

Let $\dt$ denote differentiation with respect to $t$.
We want to consider the operator $\OpH$ which maps $\fs{2}\to\fs{0}$ where
\[
\fs{2}=L^2_\loc(\R_+,\hs[2])\cap H^1_\loc(\R_+,\hs[1])\cap H^2_\loc(\R_+,\hs)
\quad\text{and}\quad
\fs{0}=L^2_\loc(\R_+,\hs).
\]
We are interested in the possible decay rate of functions $\phi\in\fs{2}$ which satisfy
$(\OpH)\phi=\OpB\phi$ where $t\mapsto\OpB$ is a uniformly bounded family of 
operators on $\hs$. At this general level we obtain the following.

\begin{theorem}
\label{thm:mainresarbAest}
Let $\phi\in\fs{2}$ and suppose 
$\norm{(\OpH)\phi(t)}_{\hs}\le\beta\norm{\phi(t)}_{\hs}$ for some $\beta>0$ and all $t>0$. 
If $\phi$ satisfies 
\begin{equation}
\label{eq:hypestmainresarbAest}
\int_0^\infty \re^{2\lambda t^{4/3}}\norm{\phi(t)}_{\hs}^2\rd t<+\infty
\end{equation}
for all $\lambda>0$, then we must have $\phi\equiv0$.
\end{theorem}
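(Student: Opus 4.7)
The plan is to establish a Carleman-type weighted energy inequality for the operator $\OpH=\dt^2-\OpA$ with weight $\re^{\tau\ctf(t)}$, $\ctf(t)=t^{4/3}$, and to combine it with $\norm{(\OpH)\phi}_{\hs}\le\blsup\norm{\phi}_{\hs}$ to force $\phi\equiv0$. As a preliminary step I would reduce to the case in which $\OpA$ is strictly positive by shifting $\OpA\mapsto\OpA+c$ and absorbing the constant into $\OpB$, which only changes $\blsup$ by an additive constant; positivity of $\OpA$ is convenient for controlling the lower-order error terms that appear in the commutator calculation.

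The technical heart of the proof is an estimate of the form
\[
C\tau^3\int_0^\infty\re^{2\tau\ctf(t)}\norm{\psi(t)}_{\hs}^2\,\rd t \le \int_0^\infty\re^{2\tau\ctf(t)}\norm{(\OpH)\psi(t)}_{\hs}^2\,\rd t,
\]
valid for all $\tau$ sufficiently large and all smooth compactly supported $\psi:\R_+\to\Dom(\OpA)$ vanishing near $t=0$. One conjugates $\OpH$ with the weight, setting $\OpL_\tau=\re^{\tau\ctf}(\OpH)\re^{-\tau\ctf}$, and decomposes $\OpL_\tau=S-T$ into its symmetric and antisymmetric parts with respect to the $L^2(\R_+,\hs)$ inner product (using $\OpA^*=\OpA$):
\[
S=\dt^2-\OpA+\tau^2(\ctf')^2, \qquad T=2\tau\ctf'\dt+\tau\ctf''.
\]
The identity $\norm{\OpL_\tau\psi}^2=\norm{S\psi}^2+\norm{T\psi}^2-\ipd{[S,T]\psi}{\psi}$, combined with commutator computation and integration by parts in $t$ (exploiting that $\OpA$ commutes with scalar functions of $t$), yields a lower bound whose leading contribution is $4\tau^3\int(\ctf')^2\ctf''\,\norm{\psi}_{\hs}^2\,\rd t$. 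The exponent $4/3$ is the unique value of $\alpha$ making $(\ctf')^2\ctf''$ constant when $\ctf(t)=t^\alpha$, so the bound does not degenerate as $t\to\infty$; this is the abstract analog of the Meshkov scaling.

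To finish, I would apply the estimate to $\chi_R\phi$, where $\chi_R$ is a smooth cutoff supported in $[T/2,R]$ and equal to $1$ on $[T,R-1]$, with $T>0$ small (after modifying $\ctf$ smoothly near zero). The commutator $[\dt^2,\chi_R]$ is supported in $[T/2,T]\cup[R-1,R]$, and hypothesis \eqref{eq:hypestmainresarbAest} ensures $\int\re^{2\tau\ctf}\norm{\phi}_{\hs}^2\,\rd t<\infty$ for every $\tau$, so the contribution from $[R-1,R]$ vanishes as $R\to\infty$. Combining the Carleman estimate with $\norm{(\OpH)\phi(t)}_{\hs}\le\blsup\norm{\phi(t)}_{\hs}$ then yields
\[
(C\tau^3-2\blsup^2)\int_T^\infty\re^{2\tau\ctf}\norm{\phi}_{\hs}^2\,\rd t \le C'\int_{T/2}^T\re^{2\tau\ctf}\norm{\phi}_{\hs}^2\,\rd t.
\]
The right-hand side is bounded by a constant multiple of $\re^{2\tau\ctf(T)}$, while strict monotonicity of $\ctf$ forces the left-hand side to grow at least like $\re^{2\tau\ctf(T')}$ with $T'>T$ whenever $\phi\not\equiv0$ on $[T,\infty)$; letting $\tau\to\infty$ gives a contradiction unless $\phi\equiv0$ on $[T,\infty)$. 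Since $T>0$ is arbitrary, $\phi\equiv0$ on $\R_+$.

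The principal obstacle is the derivation of the Carleman estimate in the abstract setting: with $\OpA$ merely a lower semi-bounded self-adjoint operator on $\hs$, pointwise symbol calculus is unavailable, and every integration by parts and commutator identity must be justified through the spectral theorem or the quadratic form of $\OpA$. Additional care is required for the regularity and domain conditions encoded in $\fs{2}$ (in particular $\dt\phi(t)\in\hs[1]$) and for the approximation of $\phi$ by smooth $\Dom(\OpA)$-valued functions to which the formal identity can be applied.
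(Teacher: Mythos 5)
Your proposal is correct and follows essentially the same route as the paper: conjugate $\OpH$ by the weight $\re^{\lambda t^{4/3}}$, split into symmetric and antisymmetric parts, extract the $\lambda^3$ lower bound from the commutator (the paper's proposition \ref{prop:carlest43} is exactly your estimate, with the positivity coming from $\frac43(\ftew^3)'=4\lambda^3(\ctf')^2\ctf''=\mathrm{const}$), then cut off and run the standard Carleman endgame. The one step you gloss over is the claim that the cutoff error on $[R-1,R]$ vanishes because of \eqref{eq:hypestmainresarbAest} alone: that error contains $2\chi_R'\dt\phi$, and the weighted integrability of $\norm{\dt\phi(t)}_{\hs}^2$ does not follow directly from the hypothesis on $\norm{\phi(t)}_{\hs}^2$; it requires the interior (Caccioppoli-type) energy estimate $\int_s^{s+1}\norm{\dt\phi}_{\hs}^2\le C\int_{s-\epsilon}^{s+1+\epsilon}\norm{\phi}_{\hs}^2$, which is exactly the content of the paper's lemma \ref{lem:ellreg} and uses the equation $\norm{(\OpH)\phi}_{\hs}\le\beta\norm{\phi}_{\hs}$ together with the lower semi-boundedness of $\OpA$. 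You flag ``regularity and domain conditions'' as needing care, but this specific lemma is the missing ingredient to make the limit $R\to\infty$ legitimate; once it is supplied, the argument closes as you describe.
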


This result extends \cite[theorem 1]{M}; 
indeed we can recover the latter by taking $A$ to be minus the Laplacian on $\R^{d-1}$.
Furthermore the example constructed in \cite[\S2]{M} shows the decay rate limits given by 
theorem \ref{thm:mainresarbAest} cannot be improved in general.
To exclude any non-trivial solutions with superexponential decay as $t\to+\infty$ we 
impose further conditions on the gaps in the spectrum of $\OpA$, possibly in conjunction 
with some form of decay for $\OpB$.

\begin{theorem}
\label{thm:mainresgen}
Let $\phi\in\fs{2}$ and suppose 
$\norm{(\OpH)\phi(t)}_{\hs}\le b(t)\norm{\phi(t)}_{\hs}$ for some $b\in L^\infty(\R_+)$.
Further suppose one of the following is satisfied:
\begin{itemize}
\item[\upshape (i)]
$\spec(\OpA)$ contains arbitrarily large positive gaps.
\item[\upshape (ii)]
there exists $\delta>0$ such that $\R_+\setminus\spec(\OpA)$ 
contains infinitely many disjoint intervals of length $\delta$, and $b(t)\to0$ as $t\to+\infty$.
\end{itemize}
If $\phi$ satisfies 
\begin{equation}
\label{eq:hypestmainresgen}
\int_0^\infty \re^{2\lambda t}\norm{\phi(t)}_{\hs}^2\rd t<+\infty
\end{equation}
for all $\lambda>0$, then we must have $\phi\equiv0$.
\end{theorem}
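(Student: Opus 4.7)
My plan is to use the Fourier--Laplace transform in $t$ together with the spectral gap structure of $\OpA$ to turn the PDE into an algebraic relation and, via a Phragm\'en--Lindel\"of / Liouville argument, force $\wh{\phi}\equiv 0$.

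Since $\phi$ satisfies the assumed weighted $L^{2}$-decay \eqref{eq:hypestmainresgen} for every $\lambda>0$, the transform
\[
\wh{\phi}(\tau):=\int_{0}^{\infty}\re^{-i\tau t}\phi(t)\,\rd t
\]
extends to an $\hs$-valued entire function of $\tau\in\C$ of minimal exponential type (order $1$, type $0$). Integration by parts in $t$, using the bound $\norm{(\OpH)\phi(t)}_{\hs}\le b(t)\norm{\phi(t)}_{\hs}$ and the decay at infinity, gives the algebraic relation
\[
(\OpA+\tau^{2})\wh{\phi}(\tau)=-i\tau\phi(0)-\dt\phi(0)-\wh{\OpB\phi}(\tau)
\]
valid on all of $\C$. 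For $\tau=i\eta$ with $\eta^{2}$ in the middle of a gap $(\mu,\nu)\subset\R_{+}\setminus\spec(\OpA)$ of width $\delta$, $\norm{(\OpA-\eta^{2})^{-1}}\le 2/\delta$, hence
\[
\norm{\wh{\phi}(i\eta)}_{\hs}\le\frac{2}{\delta}\bigl(\eta\norm{\phi(0)}_{\hs}+\norm{\dt\phi(0)}_{\hs}+\norm{\wh{\OpB\phi}(i\eta)}_{\hs}\bigr).
\]

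The supplied sequence of gaps gives such inequalities at a sequence of points $\tau_{k}=i\eta_{k}\to i\infty$ (automatic in case (ii) because infinitely many disjoint intervals in $\R_{+}$ must escape to infinity). In case (i) the gap width itself tends to infinity, so $2/\delta_{k}\to 0$ and the bound actually shrinks with $k$; in case (ii) $\delta$ is fixed but the hypothesis $b(t)\to 0$ controls $\norm{\wh{\OpB\phi}(i\eta_{k})}$ through a temporal localisation combined with the weighted integrability of $\phi$. In either case these upper bounds on $\norm{\wh{\phi}(i\eta_{k})}$, together with the Plancherel identity $\int_{\R}\norm{\wh{\phi}(\xi+i\eta)}_{\hs}^{2}\,\rd\xi=2\pi\int_{0}^{\infty}\re^{2\eta t}\norm{\phi(t)}_{\hs}^{2}\,\rd t$ and the minimal-type entire growth of $\wh{\phi}$, should feed into an operator-valued Phragm\'en--Lindel\"of argument to conclude $\wh{\phi}\equiv 0$.

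The main obstacle I anticipate is the Phragm\'en--Lindel\"of step, which needs to turn pointwise bounds on $\wh{\phi}$ available only along a discrete sequence on the imaginary axis into bounds on the whole plane. A standard device is to apply the scalar Phragm\'en--Lindel\"of principle to $\bigipd{\wh{\phi}(\tau)}{v}$ for fixed $v\in\hs$ (this \emph{is} holomorphic), exploit the decay at the $\tau_{k}$ to conclude the scalar function vanishes on a set dense enough to force it identically zero, and then let $v$ range over a dense subset of $\hs$. A supplementary device, likely needed to make case (ii) close, is a preliminary spectral split $\phi=\chi_{\OpA\le\mu_{k}}\phi+\chi_{\OpA\ge\nu_{k}}\phi$, using the operator inequality $\OpA\ge\nu_{k}$ on $\mathrm{Ran}\,\chi_{\OpA\ge\nu_{k}}$ to bound the high-mode part in weighted $L^{2}$ via a standard elliptic energy estimate, thereby reducing the Fourier--Laplace step to the low-mode part where $\OpA$ is bounded and the inversion of $\OpA+\tau^{2}$ is uniform in $\tau$ outside a bounded region of $\C$.
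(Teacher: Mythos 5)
Your Fourier--Laplace strategy correctly identifies the role of the spectral gaps (invertibility of $\OpA+\tau^2$ at well-chosen complex frequencies), but as written it has gaps that I do not think can be repaired pointwise in $\tau$. First, the hypothesis \eqref{eq:hypestmainresgen} makes $\wh{\phi}$ entire but \emph{not} of minimal exponential type: one only has $\norm{\wh{\phi}(\xi+i\eta)}_{\hs}\le\int_0^\infty\re^{\eta t}\norm{\phi(t)}_{\hs}\,\rd t$, and for a superexponentially decaying $\phi$ with, say, $\norm{\phi(t)}_{\hs}=\re^{-t^2}$ this bound grows like $\re^{\eta^2/4}$, so $\wh{\phi}$ may have order $2$ (or worse) and the Phragm\'en--Lindel\"of/Liouville framework you invoke is unavailable. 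Second, even with good global growth, smallness of an entire function at a discrete sequence $i\eta_k\to i\infty$ forces nothing ($\sin\pi\tau$ vanishes at every integer), so bounds at the $\tau_k$ alone cannot yield $\wh{\phi}\equiv0$. Third, and most seriously, your claimed bounds at $\tau_k$ are not actually small: $\norm{\wh{(\OpB\phi)}(i\eta_k)}_{\hs}$ is controlled only by $\norm{b}_\infty\int_0^\infty\re^{\eta_k t}\norm{\phi(t)}_{\hs}\,\rd t$, which may grow superexponentially in $\eta_k$, while the gap gives only the factor $2/\delta_k$; there is no way to compare $\wh{(\OpB\phi)}(i\eta_k)$ back to $\wh{\phi}(i\eta_k)$ pointwise, since cancellation can make the latter far smaller than $\int\re^{\eta_k t}\norm{\phi}_{\hs}$. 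To absorb the perturbation you must work in $L^2$ of an entire horizontal line $\{\operatorname{Im}\tau=\gcen\}$, where Plancherel identifies $\norm{\wh{(\OpB\phi)}(\cdot+i\gcen)}_{L^2(\R)}$ with $\sqrt{2\pi}\,\norm{\re^{\gcen t}\OpB\phi}_{L^2}\le\sqrt{2\pi}\,\norm{b}_\infty\norm{\re^{\gcen t}\phi}_{L^2}$; a lower bound for $\OpA+(\xi+i\gcen)^2$ uniform over $\xi\in\R$ is then precisely equivalent to the Carleman estimate \eqref{eq:carlest}, which is the actual content of the paper's proof. A more minor defect is that $\phi(0)$ and $\dt\phi(0)$ in your algebraic relation are neither controlled nor even defined by the hypotheses, which are local on the open half-line; this forces cut-offs supported in $(\epsilon,\infty)$ and the conclusion ``$\phi=0$ on $(\epsilon,\infty)$ for every $\epsilon$'', as in the paper.

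For comparison, the paper proves proposition \ref{prop:carlest} directly in the $t$-variable: it splits $\phi$ with the spectral projections of $\OpA$ onto $[-\alpha,\glb^2]$ and $[\gub^2,\infty)$, rewrites the equation as first-order systems $\dt\bPhi_j=\OpBM{j}\bPhi_j+\bPsi_j$, and integrates differential inequalities for $\norm{\bPhi_j}_{\HS}^2$ to get the weighted estimate with gain $\glb^2\glen^2/4$; the theorem then follows from the standard cut-off argument, with the elliptic regularity lemma \ref{lem:ellreg} supplying the needed control of $\dt\phi$, and with theorem \ref{thm:mainresarbAest} (the $t^{4/3}$ estimate) finishing case (ii) once $\phi$ is shown to vanish beyond some $t_0$. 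Your low/high spectral splitting idea in the last paragraph is in the right spirit --- it is essentially the paper's decomposition --- but it needs to be carried out as an energy/Carleman inequality in $t$ rather than as pointwise resolvent bounds in $\tau$.
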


The conditions on $\OpA$ are satisfied by a number of standard operators. 
For example, condition (i) holds if $\OpA$ is 
minus the Laplace-Beltrami operator on the $n$-sphere $\mathbb{S}^n$ for $n\ge1$, 
or any positive elliptic pseudo-differential operator of order $m$ on a closed $n$-dimensional manifold 
provided $m>n$; in the latter case the spectral part of condition (ii) is met when $m=n$. 
In order to deduce theorem \ref{thm:mainresper} we need to consider 
the Laplacian on $(d-1)$-dimensional tori; the rationality assumption is then used to establish
the existence of arbitrarily large positive gaps when $d=3$, 
or infinitely many gaps of a uniform size for arbitrary $d$ (see proposition \ref{prop:specgapTlap} below). 
It is not known if arbitrarily large positive gaps exist for all $2$-dimensional tori.

\medskip

We can consider $\OpH$ as an `elliptic evolution operator'. 
The corresponding parabolic and hyperbolic evolution operators, $\dt+\OpA$ and $\dt^2+\OpA$ respectively,
were considered in \cite{M} where results in the spirit of 
theorems \ref{thm:mainresarbAest} and \ref{thm:mainresgen} were obtained.

\medskip

Theorems \ref{thm:mainresarbAest} and \ref{thm:mainresgen} are obtained from Carleman type estimates 
(see propositions \ref{prop:carlest43} and \ref{prop:carlest} respectively) using standard arguments; 
these are presented in \S\ref{sec:genres}, with the proofs of the Carleman estimates being given in \S\ref{sec:carlestpf}.
In \S\ref{sec:perres} we deduce theorem \ref{thm:mainresper} from theorem \ref{thm:mainresgen} and consideration of gaps 
in the spectra of Laplace type operators on tori.

\section{Periodic result}
\label{sec:perres}

Let $\Rlattice\subset\R^{d-1}$ be a lattice with $\RDlattice\subset\R^{d-1}$ denoting the dual lattice. 
Also let $\Rucell$ and $\RDucell$ denote unit cells of $\Rlattice$ and $\RDlattice$ respectively.
Set $\torus=\R^{d-1}/\Rlattice$, the $(d-1)$-dimensional torus corresponding to $\Rlattice$. 
For each $t\in\R_+$ the function $\wt{x}\mapsto V(\wt{x},t)$ is $\Rlattice$-periodic so can be viewed 
as a function $V_t\in L^\infty(\torus)$; 
the mapping $t\mapsto V_t$ is uniformly bounded in $t$.
We will apply a Bloch-Floquet decomposition to $\Sch[]-E$ (see \cite{RS4}); this leads to 
a family of lower semi-bounded self-adjoint elliptic operators on $\torus$ defined by
\[
\Tlap=(-\ri\nabla_{\wt{x}}+\theta)^2-E
\]
for $\theta\in\RDucell$.
The operator $\dt^2-\Tlap$ maps $\fs{2}\to\fs{0}$ where
\[
\fs{2}=\bigcap_{j=0,1,2}H^j_\loc((1,\infty),H^{2-j}(\torus))
\quad\text{and}\quad
\fs{0}=L^2_\loc((1,\infty),L^2(\torus)).
\]
The Bloch-Floquet decomposition is implemented by the Gelfand transform; for 
$v\in L^2(\R^{d-1})$ set
\[
\BFt{v}(\wt{x})=\sum_{l\in\Rlattice} \re^{-\ri\theta.(\wt{x}+l)}v(\wt{x}+l),
\quad\theta\in\RDucell,\,\wt{x}\in\R^{d-1}.
\]
This expression is clearly $\Rlattice$-periodic in $\wt{x}$ so can be viewed as a function on $\torus$;
in fact $\BFt{}$ is a unitary mapping $L^2(\R^{d-1})\to L^2(\RDucell,L^2(\torus))$. 

Let $u\in L^2(\halfsp)$. 
For each $\theta\in\RDucell$ and $t>0$ set $\tfn{t}{\cdot}=\BFt{u(\cdot,t)}$, 
considered as an element of $L^2(\torus)$. 

\begin{lemma}
\label{lem:basicpropphitheta}
Suppose $u$ is a distributional solution of \eqref{eq:basic} 
on $\halfsp$ and 
satisfies \eqref{eq:hypestmainresper} for some $\kappa\ge0$ and $\lambda>0$. 
Then $\tfn{}{}\in\fs{2}$ with $(\dt^2-\Tlap)\tfn{}{}=V_t\tfn{}{}$ and 
\begin{equation}
\label{eq:hypestlemper}
\int_{1}^\infty \re^{2\lambda t}\norm{\tfn{t}{\cdot}}_{L^2(\torus)}^2\rd t<+\infty
\end{equation}
for almost all $\theta\in\RDucell$. 
If $\kappa>(d-1)/2$ then the same conclusion holds for all $\theta\in\RDucell$, 
while $\tfn{}{}$ depends continuously on $\theta$.
\end{lemma}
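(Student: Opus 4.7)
\emph{Fibrewise equation and regularity.} Interior elliptic regularity for $\Delta u=(V-E)u$ with $V\in L^\infty$ gives $u\in H^2_\loc(\halfsp)$. The plan is to transfer this equation fibrewise via the Gelfand transform and then read off \eqref{eq:hypestlemper} and the regularity claim. The transform $\BFt{}$ acts only in $\wt x$; it commutes with $\dt$, intertwines $-\Delta_{\wt x}$ with $(-\ri\nabla_{\wt x}+\theta)^2$ on each fibre, and diagonalises multiplication by the $\Rlattice$-periodic function $V(\cdot,t)$ as multiplication by $V_t\in L^\infty(\torus)$. Applying $\BFt{}$ distributionally to $\dt^2 u+\Delta_{\wt x}u=(V-E)u$ therefore yields $(\dt^2-\Tlap)\tfn{}{}=V_t\tfn{}{}$ on $(1,\infty)\times\torus$ for each $\theta\in\RDucell$. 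Since $\dt^2-\Tlap$ has principal symbol $-(\tau^2+|\xi|^2)$ and so is elliptic on $(1,\infty)\times\torus$, interior elliptic regularity applied to this equation upgrades any $L^2_\loc$ solution $\tfn{}{}$ to $H^2_\loc((1,\infty)\times\torus)$, which unpacks (via Fubini) to the three Sobolev inclusions defining $\fs{2}$.

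\emph{Generic case $\kappa\ge0$.} Unitarity of $\BFt{}$ combined with Fubini gives
\[
\int_1^\infty\re^{2\lambda t}\int_{\R^{d-1}}|u(\wt x,t)|^2\,\rd\wt x\,\rd t \;=\; \int_{\RDucell}\int_1^\infty\re^{2\lambda t}\norm{\tfn{t}{\cdot}}_{L^2(\torus)}^2\,\rd t\,\rd\theta.
\]
Because $\kappa\ge0$ implies $\bk{\wt x}^{2\kappa}\ge1$, the left-hand side is dominated by \eqref{eq:hypestmainresper} and so finite; hence the inner integral on the right is finite for almost every $\theta$, which is exactly \eqref{eq:hypestlemper}. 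In particular $\tfn{}{}\in L^2_\loc$ for a.e.\ $\theta$, and the elliptic regularity observation above then places $\tfn{}{}$ in $\fs{2}$ for such $\theta$.

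\emph{Refined case $\kappa>(d-1)/2$ and main obstacle.} For pointwise statements in $\theta$ I work directly with the defining series. Cauchy--Schwarz in $l\in\Rlattice$ gives
\[
|\BFt{v}(\wt x)|^2 \;\le\; \Bigl(\sum_{l\in\Rlattice}\bk{\wt x+l}^{-2\kappa}\Bigr)\Bigl(\sum_{l\in\Rlattice}\bk{\wt x+l}^{2\kappa}|v(\wt x+l)|^2\Bigr),
\]
and for $2\kappa>d-1$ the first factor is finite and uniformly bounded on $\wt x\in\Rucell$ (and independent of $\theta$), say by a constant $C_\kappa$. Integrating over $\Rucell$ and unfolding the lattice sum yields
\[
\norm{\BFt{v}}_{L^2(\torus)}^2 \;\le\; C_\kappa\int_{\R^{d-1}}\bk{\wt x}^{2\kappa}|v(\wt x)|^2\,\rd\wt x;
\]
applied with $v=u(\cdot,t)$ and combined with the weight $\re^{2\lambda t}$ this delivers \eqref{eq:hypestlemper} for every $\theta\in\RDucell$. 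Continuity of $\theta\mapsto\tfn{}{}$ in the weighted space implied by \eqref{eq:hypestlemper} then follows because the truncated sums $\sum_{|l|\le N}\re^{-\ri\theta\cdot(\wt x+l)}u(\wt x+l,t)$ are trigonometric polynomials in $\theta$, and the same Cauchy--Schwarz bound applied to the tail shows that convergence as $N\to\infty$ is uniform in $\theta$. The main subtlety of the whole argument is making the formal Gelfand intertwining rigorous at the distributional level while keeping careful track of which conclusions hold for almost every versus every $\theta$; once this bookkeeping is in place everything reduces to Plancherel/Fubini in the generic case and to the summability $\sum_{l\in\Rlattice}\bk{l}^{-2\kappa}<\infty$ when $2\kappa>d-1$.
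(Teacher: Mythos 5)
Your argument is essentially sound and reaches all the conclusions, but it routes the regularity differently from the paper and leaves the one genuinely delicate step informal. The paper first proves a \emph{uniform} interior estimate $\norm{u}_{H^2(\tcell{l}{s})}\le C_1\norm{u}_{L^2(\etcell{l}{s})}$ over all lattice translates (uniformity coming from translation invariance of $\Delta$ and the uniform bound on $V$), deduces $u\in H^2(\R^{d-1}\times(1,\infty))$, applies the Gelfand transform to land in $\bigcap_j H^j((1,\infty)\times\RDucell,H^{2-j}(\torus))$ where the intertwined equation holds as an $L^2$ identity on the product, and then uses Fubini to disintegrate both the regularity and the equation to almost every fibre. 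You instead transform first, knowing only $\tfn{}{}\in L^2_\loc$, and recover membership in $\fs{2}$ by interior elliptic regularity for $\dt^2-\Tlap$ on $(1,\infty)\times\torus$; that is a legitimate and arguably cleaner alternative (it avoids pushing weighted bounds on derivatives of $u$ through the transform), \emph{provided} the fibrewise distributional equation is actually in hand. That is precisely the step you flag as ``the main subtlety'' and then do not carry out: for a general $L^2$ function the fibre $\BFt{u(\cdot,t)}$ at an individual $\theta$ is not even well defined, so your first paragraph's claim that the equation holds ``for each $\theta$'' needs to be downgraded to almost every $\theta$ (obtained, e.g., by testing the product-space identity against a countable dense family of test functions, or by the paper's route), while in the case $\kappa>(d-1)/2$ the everywhere-in-$\theta$ statement must be justified separately — your Cauchy--Schwarz bound with $\sum_{l}\bk{\wt{x}+l}^{-2\kappa}<\infty$ does exactly this, since it makes the defining series converge absolutely in $L^2(\Rucell\times(s,s+1))$ uniformly in $\theta$, so the equation can be verified term by term; you should say so explicitly rather than leave it as bookkeeping. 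The remaining pieces match the paper: Plancherel plus Fubini for \eqref{eq:hypestlemper} a.e., and the summability $\sum_l\bk{l}^{-2\kappa}<\infty$ for the pointwise weighted bound (your inequality $\norm{\BFt{v}}_{L^2(\torus)}^2\le C_\kappa\int\bk{\wt{x}}^{2\kappa}\abs{v}^2$ is correct and is the same estimate as the paper's $N_s$ computation applied at the $L^2$ level). Finally, note that your tail argument gives continuity of $\theta\mapsto\tfn{}{}$ only in $L^2_\loc((1,\infty),L^2(\torus))$, whereas the paper obtains it in $H^2(\torus\times(s,s+1))$ via dominated convergence; the weaker topology still suffices for the density argument in the proof of theorem \ref{thm:mainresper}, but you should state the topology you are using.
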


\begin{proof}
Choose a basis $\{\bde_1,\dots,\bde_{d-1}\}$ for $\Rlattice$ corresponding to the unit cell $\Rucell$.
Set 
\[
\eRucell=\bigl\{r_1\bde_1+\dots+r_{d-1}\bde_{d-1}:r_1,\dots,r_{d-1}\in(-1/2,3/2)\bigr\}
\]
so $\bigcup_{l\in\Rlattice}(\eRucell+l)$ is a $2^{d-1}$-fold covering of $\R^{d-1}$.
For any $l\in\Rlattice$ and $s\ge1$ let $\tcell{l}{s}=(\Rucell+l)\times(s,s+1)$ and $\etcell{l}{s}=(\eRucell+l)\times(s-1,s+2)$.
Note that $\tcell{l}{s}\subset\subset\etcell{l}{s}$. 

From \eqref{eq:hypestmainresper} we get $u\in L^2(\halfsp)$ (recall that $\kappa\ge0$ and $\lambda>0$), 
while $\Delta u=(V-E)u$ as distributions and $V$ is uniformly bounded. It follows that $u\in H^2_\loc(\halfsp)$ and
\begin{equation}
\label{eq:ellregu1}
\norm{u}_{H^2(\tcell{l}{s})}\le C_{1}\norm{u}_{L^2(\etcell{l}{s})}
\end{equation}
for some constant $C_{1}$ which is independent of $l\in\Rlattice$ and $s\ge 1$.
This uniformity leads to $u\in H^2(\R^{d-1}\times(1,\infty))$.
Applying the Gelfand transform we get 
\[
(t,\theta)\mapsto\BFt{u(\cdot,t)}=\tfn{\cdot}{t}\in\bigcap_{j=0,1,2}H^j\bigl((1,\infty)\times\RDucell,H^{2-j}(\torus)\bigr)
\]
while, using the definition of $\Tlap$ and the $\Rlattice$-periodicity of $V$, 
\begin{align*}
(\dt^2-\Tlap)\tfn{t}{\wt{x}}
&=\sum_{l\in\Rlattice} \re^{-\ri\theta.(\wt{x}+l)}\bigl(\dt^2-(-\ri\nabla_{\wt{x}}+\theta-\theta)^2+E\bigr)u(\wt{x}+l,t)\\
&=\sum_{l\in\Rlattice} \re^{-\ri\theta.(\wt{x}+l)}(\Delta+E)u(\wt{x}+l,t)\\
&=V(\wt{x},t)\sum_{l\in\Rlattice} \re^{-\ri\theta.(\wt{x}+l)}u(\wt{x}+l,t)
=V_t(\wt{x})\tfn{t}{\wt{x}}
\end{align*}
as elements of $L^2\bigl((1,\infty)\times\RDucell,L^2(\torus)\bigr)$.
Fubini's theorem then implies 
\[
\tfn{}{}\in\bigcap_{j=0,1,2}H^j\bigl((1,\infty),H^{2-j}(\torus)\bigr)\subset\fs{2},
\]
with $(\dt^2-\Tlap)\tfn{}{}=V_t\tfn{}{}$ as elements of $\fs{0}$, for almost all $\theta\in\RDucell$.

Since $\BFt{}$ is unitary and $\kappa\ge0$ \eqref{eq:hypestmainresper} gives
\[
\int_{\RDucell}\int_{0}^\infty\int_{\torus} \re^{2\lambda t}\abs{\tfn{t}{\wt{x}}}^2\rd\wt{x}\,\rd t\,\rd\theta
=\int_{\halfsp}\re^{2\lambda t}\abs{u(\wt{x},t)}^2\rd\wt{x}\,\rd t<+\infty.
\]
This leads to $\eqref{eq:hypestlemper}$ for almost all $\theta\in\RDucell$.

\medskip

Now suppose \eqref{eq:hypestmainresper} holds for some $\kappa>(d-1)/2$. 
Then $C_{2}=\sum_{l\in\Rlattice}{\bk{l}^{-2\kappa}}<+\infty$.
With $w=\sup\{\abs{\wt{x}}:\wt{x}\in\Rucell\}$ we also have
\[
\wt{x}\in\eRucell+l\implies\abs{l}\le\abs{\wt{x}}+\tfrac32 w
\implies\bk{l}^{2\kappa}\le C_{3}\bk{\wt{x}}^{2\kappa}
\]
where $C_{3}=(\max\{1+9w^2/2,2\})^\kappa$. Let $\theta\in\RDucell$.
For any $s\ge1$ \eqref{eq:ellregu1} then gives
\begin{align*}
\norm{\tfn{}{}}_{H^2(\torus\times(s,s+1))}
&\le\sum_{l\in\Rlattice}\,\norm{\re^{-\ri\theta.\wt{x}}}_{C^2(\Rucell+l)}\norm{u}_{H^2(\tcell{l}{s})}\\
&\le C_{1}C_{4}\sum_{l\in\Rlattice}\,\norm{u}_{L^2(\etcell{l}{s})}
\le C_{1}C_{4}(C_{2}N_{s})^{1/2},
\end{align*}
where $C_4=\sup_{\theta'\in\RDucell}\norm{\re^{-\ri\theta'.\wt{x}}}_{C^2(\R^{d-1})}$ and
\begin{align*}
N_{s}
=\sum_{l\in\Rlattice}\bk{l}^{2\kappa}\norm{u}_{L^2(\etcell{l}{s})}^2
&\le C_{3}\sum_{l\in\Rlattice}\int_{s-1}^{s+2}\int_{\eRucell+l}\bk{\wt{x}}^{2\kappa}\abs{u(\wt{x},t)}^2\rd\wt{x}\,\rd t\\
&=2^{d-1}C_{3}\int_{s-1}^{s+2}\int_{\R^{d-1}}\bk{\wt{x}}^{2\kappa}\abs{u(\wt{x},t)}^2\rd\wt{x}\,\rd t.
\end{align*}
Now $N_{s}<+\infty$ by \eqref{eq:hypestmainresper}, so $\tfn{}{}\in\fs{2}$.
A simpler version of this argument also gives 
\begin{align*}
\int_{1}^\infty\int_{\torus} \re^{2\lambda t}\abs{\tfn{t}{\wt{x}}}^2\rd\wt{x}\,\rd t
&\le C_{2}\sum_{s\in\N}\re^{2\lambda(s+1)}\sum_{l\in\Rlattice}\bk{l}^{2\kappa}\norm{u}_{L^2(\tcell{l}{s})}^2\\
&\le C_{2}C_{3}\re^{2\lambda}\int_{1}^\infty\int_{\R^{d-1}}\bk{\wt{x}}^{2\kappa}\re^{2\lambda t}\abs{u(\wt{x},t)}^2\rd\wt{x}\,\rd t
<+\infty.
\end{align*}

For $l\in\Rlattice$ and $\theta,\theta'\in\RDucell$ set 
$\delta_l(\theta,\theta')=\norm{\re^{-\ri\theta.(\wt{x}+l)}-\re^{-\ri\theta'.(\wt{x}+l)}}_{C^2(\Rucell)}$.
Arguing as above, 
\[
\norm{\tfn{}{}-\tfn[\theta']{}{}}_{H^2(\torus\times(s,s+1))}^2
\le C_{1}^2C_{2}\sum_{l\in\Rlattice}\delta_l(\theta,\theta')^2\,\bk{l}^{2\kappa}\norm{u}_{L^2(\etcell{l}{s})}^2.
\]
For fixed $l\in\Rlattice$, $\delta_l(\theta,\theta')\le 2C_{4}$
and $\delta_l(\theta,\theta')\to0$ as $\abs{\theta-\theta'}\to0$.
Since $N_{s}<+\infty$
dominated convergence then gives $\norm{\tfn{}{}-\tfn[\theta']{}{}}_{H^2(\torus\times(s,s+1))}\to0$
as $\abs{\theta-\theta'}\to0$. 
\end{proof}

It is straightforward to check that
\[
\spec(\Tlap)=\bigl\{\abs{k+\theta}^2-E:k\in\RDlattice\bigr\}.
\]
The next result establishes a key part of the hypothesis in theorem \ref{thm:mainresgen}.

\begin{proposition}
\label{prop:specgapTlap}
Let $\theta\in\RDucell$.
If $d\ge3$ suppose that $\Rlattice$ is a rational lattice (in $\R^{d-1}$) while $\theta$ has rational 
coordinates (with respect to a basis of $\RDlattice$). 
\begin{itemize}
\item[\upshape (i)]
If $d=2$ or $3$ then $\spec(\Tlap)$ contains arbitrarily large positive gaps.
\item[\upshape (ii)]
If $d\ge 4$ then there exists $\delta>0$ such that $\spec(\Tlap)$ contains 
infinitely many positive gaps of length at least $\delta$.
\end{itemize}
\end{proposition}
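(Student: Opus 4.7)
The plan is to translate all questions about gaps in $\spec(\Tlap) = \{|k + \theta|^2 - E : k \in \RDlattice\}$ into questions about the positive set $S = \{|k + \theta|^2 : k \in \RDlattice\}$, since the constant shift $-E$ does not affect gap structure. The set $S$ is always discrete in $\R_{\ge 0}$ and tends to infinity (a ball of radius $\sqrt{R}$ contains only finitely many lattice points), so $\R_{\ge 0} \setminus S$ is already a disjoint union of infinitely many open intervals; the content of the proposition is a lower bound on their lengths. For part (i) with $d = 2$ the lattice $\RDlattice \subset \R$ has the form $\alpha\Z$ for some $\alpha > 0$, and $S = \{(\alpha n + \theta)^2 : n \in \Z\}$. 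For $n > |\theta|/\alpha$ the successive values are increasing and their consecutive gap is $\alpha^2(2n+1) + 2\alpha\theta \to +\infty$, giving arbitrarily large gaps.

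For the rational cases my first step is a scaling lemma. I choose a basis $f_1, \dots, f_{d-1}$ of $\RDlattice$; rationality of $\Rlattice$ forces the same for $\RDlattice$, so the $f_j$ may be chosen in $\mathbb{Q}^{d-1}$. Writing $\theta = \sum_j t_j f_j$ with $t_j \in \mathbb{Q}$, for $k = \sum_j n_j f_j$ one has
\begin{equation*}
|k + \theta|^2 = \sum_{i, j} (n_i + t_i)(n_j + t_j) \ipd{f_i}{f_j},
\end{equation*}
and all the Gram entries $\ipd{f_i}{f_j}$ together with the coefficients $t_j$ are rational. Clearing a common denominator produces $M \in \N$ with $M \cdot S \subset \Z_{\ge 0}$. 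Part (ii) for $d \ge 4$ is then immediate: two distinct members of $(1/M)\Z$ differ by at least $1/M$, so every one of the infinitely many consecutive gaps in $S$ has length at least $\delta = 1/M$.

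Part (i) with $d = 3$ is the substantive case. By the scaling lemma $M \cdot S$ lies inside the set $T \subset \Z_{>0}$ of values of a positive-definite integer-valued binary quadratic form on a fixed residue class of $\Z^2$. A classical theorem of Landau (for the principal form $x^2 + y^2$), extended by Bernays to arbitrary binary forms, gives $|T \cap [1,N]| = O(N/\sqrt{\log N})$, so $T$ has natural density zero in $\Z$. Any density-zero subset of $\Z_{>0}$ must have unbounded consecutive gaps, because a uniform bound $t_{n+1} - t_n \le K$ would force $|T \cap [1,N]| \ge N/K$ for large $N$. Hence $T$, and therefore $S$, has arbitrarily large gaps.

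The main obstacle is securing the density-zero input for the $d = 3$ case; the other cases reduce to essentially algebraic bookkeeping. The cleanest route cites Landau--Bernays directly, but a self-contained alternative combines the Chinese remainder theorem with Dirichlet's theorem on primes in arithmetic progressions to produce arbitrarily long runs of consecutive integers each divisible to an odd power by a prime not represented by the form, hence not in the image of the form itself.
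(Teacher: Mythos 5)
Your proposal is correct and follows essentially the same route as the paper: the $d=2$ case by direct computation of consecutive gaps, rationality used to rescale $\spec(\Tlap)+E$ into a discrete arithmetic set (giving the uniform gap $\delta$ for $d\ge4$), and the $d=3$ case via the density-zero property of the value set of a positive definite integral binary quadratic form. The only cosmetic difference is the reference for that last fact --- the paper cites Pall's asymptotic $\#\{n\in\Viqf:n\le N\}\sim C N/\sqrt{\log N}$ rather than Landau--Bernays --- which plays the identical role.
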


When $d=3$ we need non-trivial information about the gaps in the values realised by a binary quadratic form.
This is taken from \cite{P} and was previously observed in \cite{M} 
for the special case $\Rlattice=(2\pi\Z)^2$.

\begin{proof}
If $d=2$ we have $\RDlattice=f\Z$ and $\theta\in[0,f)$ for some $f\in\R_+$. 
Then $\spec(\Tlap)=\{(mf+\theta)^2-E:m\in\Z\}$; the existence of arbitrarily large gaps follows easily.

\medskip

Now suppose $d\ge3$. Choose a basis $\{\bdf_1,\dots,\bdf_{d-1}\}$ for the lattice $\RDlattice\subset\R^{d-1}$
corresponding to the unit cell $\RDucell$. If $k\in\RDlattice$ and $\theta\in\RDucell$ we can write
\begin{equation}
\label{kthbasisdecomp:eq}
k=m_1\bdf_1+\dots+m_{d-1}\bdf_{d-1}
\quad\text{and}\quad
\theta=\mu_1\bdf_1+\dots+\mu_{d-1}\bdf_{d-1}
\end{equation}
for some $m_i\in\Z$ and $\mu_i\in[0,1)$, $i=1,\dots,d-1$.
Since $\Rlattice$ and hence $\RDlattice$ are rational, 
we can find $\sigma>0$ and a positive definite integral quadratic form $\iqf$ so that
\[
\abs{k+\theta}^2=\sigma\iqf(m_1+\mu_1,\dots,m_{d-1}+\mu_{d-1})
\]
when $k\in\RDlattice$ and $\theta\in\RDucell$ are as given in \eqref{kthbasisdecomp:eq}.
Let $\Viqf=\{\iqf(\mathbf{m}):\mathbf{m}\in\Z^{d-1}\}\subseteq\Z$
denote the values of $\iqf$ realised by integer arguments.

Now suppose $\theta=\mu_1\bdf_1+\dots+\mu_{d-1}\bdf_{d-1}\in\RDucell$ has rational coefficients.
Thus we can write $\mu_i=r_i/l$ for some $l\in\N$ and $r_i\in\{0,\dots,l-1\}$, $i=1,\dots,d-1$.
Then
\[
\abs{k+\theta}^2=\frac{\sigma}{l^2}\,\iqf\bigl(lm_1+r_1,\dots,lm_{d-1}+r_{d-1}\bigr)\in\frac{\sigma}{l^2}\,\Viqf.
\]
Hence $\spec(\Tlap)+E\subseteq\sigma\Viqf/l^2\subseteq\sigma\Z/l^2$; part (ii) follows.

If $d=3$ then $\iqf$ is a positive definite binary quadratic form. 
By \cite[theorem 2]{P} there exists $C_{5}>0$ such that
\[
\#\{n\in\Viqf:n\le N\}\sim\frac{C_{5}N}{\sqrt{\log N}}
\quad\text{as $N\to+\infty$.}
\]
The existence of arbitrarily large positive gaps in $\Viqf$, and thus $\spec(\Tlap)$, follows.
\end{proof}

Theorem \ref{thm:mainresper} is now a straightforward corollary of theorem \ref{thm:mainresgen}.

\begin{proof}[Proof of theorem \ref{thm:mainresper}]
Suppose $d\ge3$ and \eqref{eq:hypestmainresper} holds for some $\kappa>(d-1)/2$ and all $\lambda>0$.
If $\theta\in\RDucell$  lemma \ref{lem:basicpropphitheta} shows $\tfn{}{}\in\fs{2}$ satisfies 
$(\dt^2-\Tlap)\tfn{}{}=V_t\tfn{}{}$ and
\[
\int_{1}^\infty \re^{2\lambda t}\norm{\tfn{t}{\cdot}}_{L^2(\torus)}^2\rd t<+\infty
\]
for all $\lambda>0$. 
If $\RDlattice$ is rational and $\theta\in\RDucell$ has rational coordinates with respect to $\RDlattice$, 
proposition \ref{prop:specgapTlap} and (a translated version of) theorem \ref{thm:mainresgen} then give 
$\tfn{t}{\cdot}\equiv0$ for all $t>1$. 
However the set of $\theta\in\RDucell$ with rational coordinates is 
dense, while $\tfn{}{}$ depends continuously on $\theta$ by lemma \ref{lem:basicpropphitheta}.
It follows that $\tfn{}{}\equiv0$ on $\torus\times(1,\infty)$ for all $\theta\in\RDucell$. 

Now let $\wt{x}\in\R^{d-1}$ and choose $\wt{x}'\in\Rucell$ and $l'\in\Rlattice$ with $\wt{x}=\wt{x}'+l'$.
Then, for $t>1$,
\begin{align}
0&=\frac{1}{\abs{\RDucell}}\int_{\RDucell}\re^{\ri\theta.\wt{x}}\tfn{t}{\wt{x}'}\rd\theta\nonumber\\
\label{invBFtrans:eq}
&=\frac{1}{\abs{\RDucell}}\int_{\RDucell}\sum_{l\in\Rlattice}\re^{\ri\theta.(l'-l)}u(\wt{x}'+l,t)\rd\theta
=\sum_{l\in\Rlattice}\delta_{ll'}u(\wt{x}'+l,t)=u(\wt{x},t).
\end{align}
Thus $u\equiv0$ on $\R^{d-1}\times(1,\infty)\subset\halfsp$.
Unique continuation (see \cite[theorem XIII.63]{RS4}, for example)
then shows $u$ is trivial on $\halfsp$.

The case $d=2$ can be handled similarly; in this case we get $\tfn{}{}\equiv0$ on $\torus\times(1,\infty)$ 
for almost all $\theta\in\RDucell$; this is sufficient to allow the reconstruction of $u$ as in \eqref{invBFtrans:eq}.
\end{proof}

\section{General result}
\label{sec:genres}

The Carleman type estimates we use for 
theorems \ref{thm:mainresarbAest} and \ref{thm:mainresgen} 
are stated in propositions \ref{prop:carlest43} and \ref{prop:carlest} respectively;
their proofs are deferred to \S\ref{sec:carlestpf}.
For convenience choose $\alpha\ge0$ with $\OpA+\alpha I\ge0$; in particular $\spec(\OpA)\subseteq[-\alpha,+\infty)$. 

\begin{proposition}
\label{prop:carlest43}
Let $\phi\in C^2_0(\R_+,\hs[2])$ and choose $\epsilon>0$ so that $\phi(t)=0$ for $t<\epsilon$. 
If $\gcen\ge\epsilon^{-4/3}$ then
\begin{equation}
\label{eq:carlest43}
\lambda^3\int_0^\infty \re^{2\gcen t^{4/3}}\norm{\phi(t)}_{\hs}^2\rd t
\le\int_0^\infty \re^{2\gcen t^{4/3}}\norm{(\OpH)\phi(t)}_{\hs}^2\rd t.
\end{equation}
\end{proposition}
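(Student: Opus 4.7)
The plan is the standard Carleman strategy of conjugating by the exponential weight and splitting the resulting operator into symmetric and antisymmetric parts. Set $\varphi(t) = \gcen t^{4/3}$ and $\psi(t) = \re^{\varphi(t)} \phi(t)$, so $\psi$ still has compact support in $[\epsilon,\infty)$. A direct calculation using $\re^{\varphi} \dt \re^{-\varphi} = \dt - \varphi'$ gives
\[
\re^{\varphi}(\OpH) \re^{-\varphi} = \dt^2 - 2\varphi' \dt - \varphi'' + (\varphi')^2 - \OpA,
\]
and after using $\dt \varphi' - \varphi' \dt = \varphi''$ to absorb the $-\varphi''$, this splits as $S + T$ with $S = \dt^2 + (\varphi')^2 - \OpA$ and $T = -(\dt \varphi' + \varphi' \dt)$. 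Since $\OpA$ is self-adjoint and commutes with scalar operators in $t$, and $\dt^* = -\dt$ on $C^\infty_0(\R_+)$, one has $S^* = S$ and $T^* = -T$ on $L^2(\R_+,\hs)$, whence the standard identity
\[
\bignorm{\re^{\varphi}(\OpH)\phi}_{L^2(\R_+,\hs)}^2 = \norm{S\psi}^2 + \norm{T\psi}^2 + \ipd{[S,T]\psi}{\psi}
\]
reduces matters to proving $\ipd{[S,T]\psi}{\psi} \ge \gcen^3 \norm{\psi}_{L^2}^2$.

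Computing the commutator (with the $\OpA$-contribution vanishing since $\OpA$ commutes with both $\varphi'$ and $\dt$) I would obtain
\[
[S,T] = 4(\varphi')^2 \varphi'' - \varphi^{(4)} - 4\varphi''' \dt - 4\varphi'' \dt^2.
\]
Pairing with $\psi$ and using $2\Real \ipd{\dt \psi}{\psi}_{\hs} = \dt \norm{\psi}_{\hs}^2$ to eliminate the $t$-derivatives by integration by parts (boundary terms vanish by compact support), the expression collapses to
\[
\ipd{[S,T]\psi}{\psi} = 4\int_0^\infty (\varphi')^2 \varphi'' \norm{\psi}_{\hs}^2 \rd t + 4\int_0^\infty \varphi'' \norm{\dt \psi}_{\hs}^2 \rd t - \int_0^\infty \varphi^{(4)} \norm{\psi}_{\hs}^2 \rd t.
\]

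The choice of exponent $4/3$ is then exactly what makes the first term work: direct computation gives $(\varphi')^2 \varphi'' = \tfrac{64}{81}\gcen^3$, constant in $t$, producing a main contribution of $\tfrac{256}{81}\gcen^3 \norm{\psi}_{L^2}^2$. The middle term is nonnegative (as $\varphi'' \ge 0$) and is dropped. The error $\varphi^{(4)} = \tfrac{40}{81}\gcen t^{-8/3}$ is controlled on $\supp\psi \subseteq [\epsilon,\infty)$ by $\tfrac{40}{81}\gcen\,\epsilon^{-8/3}$, and the hypothesis $\gcen \ge \epsilon^{-4/3}$ (equivalently $\gcen^2 \ge \epsilon^{-8/3}$) converts this into $\tfrac{40}{81}\gcen^3$. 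Combining gives $\ipd{[S,T]\psi}{\psi} \ge \tfrac{216}{81}\gcen^3 \norm{\psi}_{L^2}^2 \ge \gcen^3 \norm{\psi}_{L^2}^2$, which on unwinding $\norm{\psi}_{L^2}^2 = \int \re^{2\gcen t^{4/3}}\norm{\phi(t)}_{\hs}^2 \rd t$ is exactly \eqref{eq:carlest43}.

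The main obstacle I expect is the bookkeeping: tracking the four terms of $[S,T]$ through the two integrations by parts to see that the only scalar error surviving besides the positive main term and the positive gradient term is $-\int \varphi^{(4)} \norm{\psi}_{\hs}^2 \rd t$ with its small coefficient $\tfrac{40}{81}$. The exponent $4/3$ is in fact forced: for $\varphi = \gcen t^s$ one has $(\varphi')^2 \varphi'' = \gcen^3 s^3(s-1)t^{3s-4}$, and demanding $t$-independence (so that the main term scales cleanly as $\gcen^3\norm{\psi}_{L^2}^2$) uniquely yields $s = 4/3$.
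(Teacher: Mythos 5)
Your proof is correct and follows essentially the same route as the paper: the paper conjugates by the same weight and splits $\re^{\Ftew}(\OpH)\re^{-\Ftew}$ into the symmetric part $\dt^2-\OpA+\ftew^2$ and the antisymmetric part $-(2\ftew\dt+\ftew')$, and its cross term $-2\Real\ipd{\OpL\psi}{(\dt^2-\OpAw)\psi}_{\hs}$ is exactly your commutator $\ipd{[S,T]\psi}{\psi}$, with identical numerology ($4\ftew^2\ftew'=\tfrac{256}{81}\lambda^3$, error $\tfrac{40}{81}\lambda t^{-8/3}$). The only (immaterial) difference is bookkeeping: the paper retains $\norm{T\psi}^2$ and checks the combined integrand is pointwise nonnegative under $\lambda\ge t^{-4/3}$, whereas you discard both squared norms and absorb the $\varphi^{(4)}$ error into the main term via $\lambda^2\ge\epsilon^{-8/3}$ on the support.
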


\begin{proposition}
\label{prop:carlest}
Suppose $(\glb^2,\gub^2)\cap\spec(\OpA)=\emptyset$ for some $0<\glb<\gub$ with $3\glb^2>\alpha$.
Set $\gcen=(\glb+\gub)/2$ and $\glen=\gub-\glb$. 
For any $\phi\in C^2_0(\R_+,\hs[2])$ we have
\begin{equation}
\label{eq:carlest}
\frac{\glb^2\glen^2}{4}\int_0^\infty \re^{2\gcen t}\norm{\phi(t)}_{\hs}^2\rd t
\le\int_0^\infty \re^{2\gcen t}\norm{(\OpH)\phi(t)}_{\hs}^2\rd t.
\end{equation}
\end{proposition}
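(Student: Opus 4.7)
The plan is to remove the weight by conjugation, split the resulting operator into formally symmetric and anti-symmetric parts in $L^2(\R_+,\hs)$, and then use the spectral theorem for $\OpA$ together with Plancherel in $t$ to reduce the whole estimate to a scalar pointwise inequality.

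First I would substitute $\psi(t)=\re^{\gcen t}\phi(t)$, which again lies in $C^2_0(\R_+,\hs[2])$. A direct computation gives $\re^{\gcen t}(\OpH)\phi = \OpL\psi$ where
\[
\OpL := (\dt-\gcen)^2-\OpA = \dt^2 - 2\gcen\dt + \gcen^2 - \OpA,
\]
so \eqref{eq:carlest} is equivalent to the unweighted bound $\tfrac{\glb^2\glen^2}{4}\int_0^\infty\norm{\psi(t)}_\hs^2\,\rd t \le \int_0^\infty\norm{\OpL\psi(t)}_\hs^2\,\rd t$.

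Next I would write $\OpL = \OpL_+ + \OpL_-$ with
\[
\OpL_+ := \dt^2+\gcen^2-\OpA,\qquad \OpL_- := -2\gcen\,\dt.
\]
Because $\OpA$ is self-adjoint and $t$-independent, while $\psi$ vanishes near $t=0$ and $t=+\infty$, integration by parts shows that $\Real\int_0^\infty\ipd{\dt^2\psi}{\dt\psi}\,\rd t$, $\Real\int_0^\infty\ipd{\psi}{\dt\psi}\,\rd t$ and $\Real\int_0^\infty\ipd{\OpA\psi}{\dt\psi}\,\rd t$ all vanish; assembling these gives $\Real\int_0^\infty\ipd{\OpL_+\psi}{\OpL_-\psi}\,\rd t=0$, and therefore
\[
\int_0^\infty\norm{\OpL\psi}_\hs^2\,\rd t
=\int_0^\infty\norm{\OpL_+\psi}_\hs^2\,\rd t+\int_0^\infty\norm{\OpL_-\psi}_\hs^2\,\rd t.
\]

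To finish, I would use the spectral theorem to realise $\OpA$ as multiplication by a measurable function $a$ on some $L^2(M,\nu)\cong\hs$, whose essential range equals $\spec(\OpA)\subseteq[-\alpha,\glb^2]\cup[\gub^2,+\infty)$. Extending $\psi$ by zero to $\R$ and applying Plancherel in $t$ (with frequency variable $\tau$) converts the previous identity into
\[
\int_M\!\int_\R\bigl[(\tau^2+a(m)-\gcen^2)^2+4\gcen^2\tau^2\bigr]\,\abs{\wh\psi(\tau,m)}^2\,\rd\tau\,\rd\nu(m),
\]
so the whole estimate reduces to the pointwise bound
\[
\Phi(\tau,s):=(\tau^2+s-\gcen^2)^2+4\gcen^2\tau^2 \;\ge\; \frac{\glb^2\glen^2}{4}
\]
for all $\tau\in\R$ and $s\in\spec(\OpA)$. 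This pointwise inequality is the main obstacle. Fixing $s$ and minimising in $u=\tau^2\ge0$ yields $\min_\tau\Phi=(s-\gcen^2)^2$ when $s\ge-\gcen^2$ and $\min_\tau\Phi=-4\gcen^2 s$ otherwise. A case analysis over the subranges $s\ge\gub^2$, $s\in[-\gcen^2,\glb^2]$, and (when non-empty) $s\in[-\alpha,-\gcen^2)$ --- using the identities $\gcen=(\glb+\gub)/2$, $\glen=\gub-\glb$ and the hypothesis $3\glb^2>\alpha$ --- then delivers the required uniform lower bound by elementary algebra, completing the proof.
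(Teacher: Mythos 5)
Your proposal is correct, and it reaches \eqref{eq:carlest} by a genuinely different mechanism from the paper's. The paper never takes a Fourier transform in $t$: it splits $\hs$ using the spectral projections $\proj{-},\proj{+}$ of $\OpA$ onto $[-\alpha,\glb^2]$ and $[\gub^2,+\infty)$, rewrites the conjugated second-order equation as a first-order system $\dt\bPhi_j=\OpBM{j}\bPhi_j+\bPsi_j$ for the quantities $\re^{\gcen t}(\dt\pm\glb)\phi_-$ and $\re^{\gcen t}(\dt\pm\OpN)\phi_+$ with $\OpN=(\OpA\proj{+})^{1/2}$, and then integrates the differential inequalities $\pm\dt\norm{\bPhi_j}_{\HS}^2\ge\glen\norm{\bPhi_j}_{\HS}^2-2\norm{\bPsi_j}_{\HS}\norm{\bPhi_j}_{\HS}$; the hypothesis $3\glb^2>\alpha$ enters precisely in the lower bound $\OpBM{0}^*+\OpBM{0}\ge\glen\Qroj{0}$. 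Your route instead exploits the $t$-independence of the conjugated operator: after the symmetric/antisymmetric splitting and the vanishing of the cross term (which your integrations by parts correctly justify, since $\psi\in C^2_0(\R_+,\hs[2])$), the spectral theorem together with Plancherel diagonalises everything, and the estimate collapses to the scalar inequality $(\tau^2+s-\gcen^2)^2+4\gcen^2\tau^2\ge\glb^2\glen^2/4$ on $\R\times\spec(\OpA)$. Your minimisation in $u=\tau^2$ is right, and the three subcases do close: for $s\ge\gub^2$ one gets $(s-\gcen^2)^2\ge(\gub-\gcen)^2(\gub+\gcen)^2\ge(\glen^2/4)\glb^2$; for $-\gcen^2\le s\le\glb^2$ one gets $(\gcen^2-s)^2\ge(\gcen-\glb)^2(\gcen+\glb)^2\ge(\glen^2/4)\glb^2$; and for $s<-\gcen^2$ the minimum $-4\gcen^2s>4\gcen^4\ge\glb^2\glen^2/4$ suffices outright, so your argument does not actually need $3\glb^2>\alpha$ and yields a marginally stronger statement. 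What the paper's projection/ODE argument buys is robustness (no Fourier transform in $t$, hence a template that tolerates $t$-dependence); what yours buys is brevity and sharpness in this constant-coefficient setting. The only point worth making explicit in a write-up is that the vector-valued Fourier transform commutes with the closed operator $\OpA$, i.e.\ $\wh{\OpA\psi}=\OpA\wh{\psi}$, which holds here because $\psi$ is compactly supported and continuous with values in $\hs[2]$.
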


To apply these Carleman estimates we need to use the 
bounds on $\norm{\phi(t)}_{\hs}$ given by \eqref{eq:hypestmainresarbAest} or \eqref{eq:hypestmainresgen} 
to obtain similar bounds for $\norm{\dt\phi(t)}_{\hs}$; 
this can be done using the `elliptic regularity' of the operator $\OpH$.

\begin{lemma}
\label{lem:ellreg}
Let $\phi\in\fs{2}$ and suppose $\norm{(\OpH)\phi(t)}_{\hs}\le\beta\norm{\phi(t)}_{\hs}$
for some $\beta>0$ and all $t>0$. Let $\sigma\ge1$.
If
\[
\int_0^\infty \re^{2\lambda t^\sigma}\norm{\phi(t)}_{\hs}^2\rd t<+\infty
\]
for all $\lambda>0$ then
\[
\int_{\epsilon}^\infty \re^{2\lambda t^\sigma}\norm{\dt\phi(t)}_{\hs}^2\rd t<+\infty
\]
for all $\epsilon,\lambda>0$.
\end{lemma}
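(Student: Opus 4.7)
The plan is a weighted Caccioppoli-type estimate. Write $g(t)=(\OpH)\phi(t)$, so the hypothesis reads $\norm{g(t)}_{\hs}\le\blsup\norm{\phi(t)}_{\hs}$. Set $w(t)=\re^{2\lambda t^{\sigma}}$ and choose a family of cutoffs $\rho_T\in C^\infty_0(\R_+)$ with $\rho_T\equiv 1$ on $[\epsilon,T]$, $\supp\rho_T\subset[\epsilon/2,T+1]$, and $\norm{\rho_T'}_{\infty}+\norm{\rho_T''}_{\infty}$ bounded independently of $T$. The goal is to bound $\int_0^\infty\rho_T^2 w\norm{\dt\phi}_{\hs}^2\,\rd t$ uniformly in $T$ and then pass to the limit $T\to\infty$.

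The starting point is the pointwise identity
\[
\norm{\dt\phi(t)}_{\hs}^2=\tfrac12\dt^2\norm{\phi(t)}_{\hs}^2-\Real\ipd{\dt^2\phi(t)}{\phi(t)}_{\hs},
\]
which is valid for $\phi\in\fs{2}$ because the Sobolev embedding $H^2_{\loc}(\R_+,\hs)\hookrightarrow C^1(\R_+,\hs)$ makes $\norm{\phi(t)}_{\hs}^2$ a $C^1$ function whose distributional second derivative is exactly the right-hand side. Multiplying by $\rho_T^2 w$, integrating on $\R_+$, and integrating by parts twice on the first term (boundary contributions vanish since $\rho_T$ is compactly supported in $\R_+$) yields
\[
\int_0^\infty\rho_T^2 w\norm{\dt\phi}_{\hs}^2\rd t=\tfrac12\int_0^\infty(\rho_T^2 w)''\norm{\phi}_{\hs}^2\rd t-\int_0^\infty\rho_T^2 w\,\Real\ipd{\dt^2\phi}{\phi}_{\hs}\rd t.
\]
Next substitute $\dt^2\phi=g+\OpA\phi$: self-adjointness of $\OpA$ and the choice of $\alpha$ give $\Real\ipd{\OpA\phi}{\phi}_{\hs}=\ipd{\OpA\phi}{\phi}_{\hs}\ge-\alpha\norm{\phi}_{\hs}^2$, while Cauchy--Schwarz and the hypothesis give $\abs{\ipd{g}{\phi}_{\hs}}\le\blsup\norm{\phi}_{\hs}^2$. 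Rearranging,
\[
\int_0^\infty\rho_T^2 w\norm{\dt\phi}_{\hs}^2\rd t\le\tfrac12\int_0^\infty(\rho_T^2 w)''\norm{\phi}_{\hs}^2\rd t+(\alpha+\blsup)\int_0^\infty\rho_T^2 w\norm{\phi}_{\hs}^2\rd t.
\]

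It remains to control the weight. Expanding $(\rho_T^2 w)''=\rho_T^2 w''+2(\rho_T^2)'w'+(\rho_T^2)''w$, on $[\epsilon,T]$ this reduces to $w''=\bigl(4\lambda^2\sigma^2 t^{2\sigma-2}+2\lambda\sigma(\sigma-1)t^{\sigma-2}\bigr)w$, while the cross and curvature terms are supported on $[\epsilon/2,\epsilon]\cup[T,T+1]$ with coefficients bounded independently of $T$. The key observation is that for any $\lambda'>\lambda$ and $\sigma\ge1$ one has $t^{2\sigma-2}\re^{2\lambda t^{\sigma}}\le C(\lambda,\lambda',\sigma)\re^{2\lambda' t^{\sigma}}$ for all $t>0$, and similarly for $t^{\sigma-2}$ on $[\epsilon,\infty)$. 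Fixing any such $\lambda'$, the right-hand side is bounded by a constant multiple of $\int_{\epsilon/2}^{T+1}\re^{2\lambda' t^{\sigma}}\norm{\phi}_{\hs}^2\rd t$, which is finite and uniform in $T$ by the hypothesis applied with exponent $\lambda'$. The tail contribution on $[T,T+1]$ tends to zero as $T\to\infty$ by integrability; sending $T\to\infty$ and invoking monotone convergence on the left produces the claim.

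The main obstacle is the unboundedness of $\OpA$: there is no pointwise control on $\norm{\OpA\phi(t)}_{\hs}$, so one cannot solve the equation for $\dt^2\phi$ and estimate directly. The Caccioppoli test function $\rho_T^2 w\phi$ sidesteps this by involving only the quadratic form $\ipd{\OpA\phi}{\phi}_{\hs}$, which is bounded below by the semi-boundedness of $\OpA$. A secondary technicality is absorbing the polynomial growth of $(\rho_T^2 w)''/w$ into the exponential weight; this is precisely where the hypothesis $\sigma\ge1$ combined with the freedom to enlarge $\lambda$ arbitrarily enters.
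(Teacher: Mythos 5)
Your proof is correct and rests on the same mechanism as the paper's: a Caccioppoli-type integration by parts that trades $\norm{\dt\phi}_{\hs}^2$ for the form $\ipd{\OpA\phi}{\phi}_{\hs}$ (controlled by semi-boundedness) plus $\ipd{(\OpH)\phi}{\phi}_{\hs}$ (controlled by the hypothesis), with the derivative of the weight absorbed by slightly enlarging $\lambda$. The only difference is cosmetic: you build the weight $\re^{2\lambda t^{\sigma}}$ directly into the test function, whereas the paper first proves an unweighted local estimate $\int_s^{s+1}\norm{\dt\phi}_{\hs}^2\le C\int_{s-\epsilon}^{s+1+\epsilon}\norm{\phi}_{\hs}^2$ uniformly in $s$ and then sums over unit intervals against the weight.
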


\begin{proof}
Set $\OpN=(\OpA+\alpha I)^{1/2}$. 
For any $\ctf\in C^\infty_0(\R_+)$ we have
\begin{align*}
&\int_0^\infty\ctf^2\bigl[\norm{\dt\phi}_{\hs}^2+\norm{\OpN\phi}_{\hs}^2\bigr]
=-\int_0^\infty\Real\bigl[\ctf^2\bigipd{\phi}{(\dt^2-\OpN^2)\phi}_{\hs}+2\ctf\ctf'\ipd{\phi}{\dt\phi}_{\hs}\bigr]\\
&\qquad{}\le\int_0^\infty\ctf^2\bigabs{\bigipd{\phi}{(\alpha I-(\OpH))\phi}_{\hs}}
+2\int_0^\infty\bigabs{\ctf\ctf'\ipd{\phi}{\dt\phi}_{\hs}}\\
&\qquad{}\le(\alpha+\beta)\int_0^\infty\ctf^2\norm{\phi}_{\hs}^2
+2\int_0^\infty(\ctf')^2\norm{\phi}_{\hs}^2
+\frac12\int_0^\infty\ctf^2\norm{\dt\phi}_{\hs}^2
\end{align*}
since $4\abs{hh'\ipd{\phi}{\dt\phi}_{\hs}}\le4(h')^2\norm{\phi}_{\hs}^2+h^2\norm{\dt\phi}_{\hs}^2$.
If $\epsilon>0$ we can then choose $\ctf$ to be an appropriate translate of a 
function with support in $(-\epsilon,1+\epsilon)$ and value $1$ on $(0,1)$
to find $C_{6}>0$ such that
\[
\int_{s}^{s+1}\norm{\dt\phi}_{\hs}^2
\le C_{6}\int_{s-\epsilon}^{s+1+\epsilon}\norm{\phi}_{\hs}^2
\]
for all $s\ge\epsilon$.
If $\lambda'>\lambda>0$ we can find $C_{7}>0$ with 
$\gcen(n+1+\epsilon)^\sigma-\gcen' n^\sigma\le C_{7}$
for all $n\ge0$. Then
\begin{align*}
\int_{\epsilon}^\infty \re^{2\gcen t^\sigma}\norm{\dt\phi}_{\hs}^2
&\le\sum_{n=0}^\infty \re^{2\gcen(n+1+\epsilon)^\sigma}\int_{n+\epsilon}^{n+1+\epsilon}\norm{\dt\phi}_{\hs}^2\\
&\le C_{6}\re^{2C_{7}}\sum_{n=0}^\infty \re^{2\gcen'n^\sigma}
\int_{n}^{n+1+2\epsilon}\norm{\phi}_{\hs}^2\\
&\le C_{6}\re^{2C_{7}}\sum_{n=0}^\infty 
\int_{n}^{n+1+2\epsilon}\re^{2\gcen't^\sigma}\norm{\phi}_{\hs}^2\\
&\le 2(1+\epsilon)C_{6}\re^{2C_{7}}\int_0^\infty \re^{2\gcen't^\sigma}\norm{\phi}_{\hs}^2.
\end{align*}
The result follows.
\end{proof}

\begin{proof}[Proof of theorem \ref{thm:mainresarbAest}]
Set $\psi=(\OpH)\phi\in\fs{0}$. Let $\epsilon\in(0,1)$ 
and suppose $\gcen>(2\beta^2)^{1/3},\,(\epsilon/2)^{-4/3}$.
For each $R>1$ choose a cut-off function $\ctf_{R}\in C^\infty_0(\R_+)$ with $\supp(\ctf_{R})\subseteq(\epsilon/2,2R)$,
$\ctf_{R}(t)=1$ for $t\in[\epsilon,R]$, $\ctf_{R}(t)$ independent of $R$ for $t\le\epsilon$,
\begin{equation}
\label{eq:Linftybndctfderiv43}
\sup_{t\in[R,2R]}\ctf_{R}'(t)\le\frac2{R}
\quad\text{and}\quad
\sup_{t\in[R,2R]}\ctf_{R}''(t)\le\frac8{R^2}.
\end{equation}
By using a mollifier (for example) we can find an approximating sequence for $\ctf_{R}\phi$
in $C^2_0(\R^+,\hs[2])$; we may further assume elements of this sequence are supported in $[\epsilon/2,\infty)$.  
Apply proposition \ref{prop:carlest43} to elements of this sequence;
taking the limit and noting that $(\OpH)(\ctf_{R}\phi)=\ctf_{R}''\phi+2\ctf_{R}'\dt\phi+\ctf_{R}\psi$, 
we get the estimate
\begin{align*}
&\gcen^3\int_0^\infty \re^{2\gcen t^{4/3}}\ctf_{R}^2\norm{\phi}_{\hs}^2
\le\int_0^\infty \re^{2\gcen t^{4/3}}\bignorm{\ctf_{R}''\phi+2\ctf_{R}'\dt\phi+\ctf_{R}\psi}_{\hs}^2\\
&\qquad{}\le2\beta^2\int_0^\infty \re^{2\gcen t^{4/3}}\ctf_{R}^2\norm{\phi}_{\hs}^2
+\int_0^\infty \re^{2\gcen t^{4/3}}\bigl[4(\ctf_{R}'')^2\norm{\phi}_{\hs}^2+16(\ctf_{R}')^2\norm{\dt\phi}_{\hs}^2\bigr].
\end{align*}
Rearranging and using \eqref{eq:Linftybndctfderiv43} we get
\[
(\gcen^3-2\beta^2)\int_\epsilon^R \re^{2\gcen t^{4/3}}\norm{\phi}_{\hs}^2
\le C_{8}\re^{2\gcen\epsilon^{4/3}}+\remRphi
\]
where
\[
C_{8}=\int_0^\epsilon\bigl[4(\ctf_{R}'')^2\norm{\phi}_{\hs}^2+16(\ctf_{R}')^2\norm{\dt\phi}_{\hs}^2\bigr],
\]
which is independent of $R$ and $\gcen$, and
\[
\remRphi=\frac{2^8}{R^4}\int_{R}^{2R}\re^{2\gcen t^{4/3}}\norm{\phi}_{\hs}^2
+\frac{2^6}{R^2}\int_{R}^{2R}\re^{2\gcen t^{4/3}}\norm{\dt\phi}_{\hs}^2.
\]
However, our hypothesis and lemma \ref{lem:ellreg}(i) give 
\[
\int_0^\infty \re^{2\gcen t^{4/3}}\norm{\phi}_{\hs}^2,\ 
\int_{\epsilon/2}^\infty \re^{2\gcen t^{4/3}}\norm{\dt\phi}_{\hs}^2<+\infty.
\]
Thus $\remRphi\to0$ as $R\to\infty$. It follows that
\[
(\gcen^3-2\beta^2)\int_\epsilon^\infty \re^{2\gcen t^{4/3}}\norm{\phi}_{\hs}^2
\le C_{8}\re^{2\gcen\epsilon^{4/3}}.
\]
For any $\epsilon_1>\epsilon$ we then get
\begin{equation}
\label{eq:egceD}
(\gcen^3-2\beta^2)\int_{\epsilon_1}^\infty\norm{\phi}_{\hs}^2\le C_{8}\re^{-2\gcen(\epsilon_1^{4/3}-\epsilon^{4/3})}.
\end{equation}
However this inequality will
be contradicted for sufficiently large $\gcen$ if $\int_{\epsilon_1}^\infty\norm{\phi}_{\hs}^2>0$.
Hence $\phi(t)=0$ for $t>\epsilon$. 
Since $\epsilon\in(0,1)$ was arbitrary the result follows.
\end{proof}

\begin{proof}[Proof of theorem \ref{thm:mainresgen}]
Define a non-increasing function by $\blsup(t)=\sup_{t'> t}b(t')$ for $t\ge0$. 
Suppose $(\glb^2,\gub^2)\cap\spec(\OpA)=\emptyset$ for some $0<\glb<\gub$ with $3\glb^2>\alpha$. 
Set $\gcen=(\glb+\gub)/2$ and $\glen=\gub-\glb$. 
Suppose $\glb^2\glen^2>8\blsup^2(t_0)$ for some $t_0\ge0$. 
We can now emulate the proof of theorem \ref{thm:mainresarbAest}; following the argument 
as far as \eqref{eq:egceD} we get
\begin{equation}
\label{eq:fincarlestbnd}
\Bigl(\frac{\glb^2\glen^2}{4}-2\blsup^2(t_0)\Bigr)\int_{t_2}^\infty\norm{\phi}_{\hs}^2
\le C_{9}\re^{-2\gcen(t_2-t_1)}
\end{equation}
for any $t_2>t_1>t_0$, where $C_{9}$ is a constant which is independent of $\glb,\gub,\glen$ and $\gcen$. 

Our hypothesis gives a sequence of disjoint intervals $(\glb[n]^2,\gub[n]^2)$, $n=1,2,\dots$, 
in $\R_+\setminus\spec(\OpA)$ with either (i) $\gub[n]^2-\glb[n]^2\to\infty$ as $n\to\infty$, or (ii)
$\gub[n]^2-\glb[n]^2\ge\delta$ for all $n$ and $\blsup(t)\to0$ as $t\to\infty$.
We may further assume $\glb[n]$ is increasing and $\gub[n]\le3\glb[n]$ for all $n$. 
Then $\gcen[n]=(\gub[n]+\glb[n])/2\to\infty$ as $n\to\infty$, while 
\[
\gub[n]^2-\glb[n]^2=(\gub[n]+\glb[n])(\gub[n]-\glb[n])
\le4\glb[n](\gub[n]-\glb[n])=4\glb[n]\glen[n]
\]
for all $n$. We complete the argument for the two cases separately.

\medskip

\noindent
(i) In this case $\glb[n]\glen[n]\to\infty$ as $n\to\infty$. 
Taking $t_0=0$ we will contradict \eqref{eq:fincarlestbnd} for sufficiently large $n$
unless $\int_{t_2}^\infty\norm{\phi}_{\hs}^2=0$ for all $t_2>0$. Hence $\phi\equiv0$ on $\R_+$.
\medskip

\noindent
(ii) Choose $t_0$ so $\blsup^2(t_0)<2^{-7}\delta^2$. Then $\glb[n]^2\glen[n]^2/4\ge2^{-6}\delta^2>2\blsup^2(t_0)$ 
for all $n$. Since $\gcen[n]\to\infty$ \eqref{eq:fincarlestbnd} will be contradicted for sufficiently large $n$
unless $\int_{t_2}^\infty\norm{\phi}_{\hs}^2=0$ for all $t_2>t_0$.
Thus $\phi(t)=0$ for $t>t_0$. However \eqref{eq:hypestmainresgen} 
then implies \eqref{eq:hypestmainresarbAest},
so $\phi\equiv0$ on $\R_+$ by theorem \ref{thm:mainresarbAest}
\end{proof}

\section{Carleman estimates}
\label{sec:carlestpf}

\begin{proof}[Proof of proposition \ref{prop:carlest43}]
Define $\Ftew,\ftew:\R_+\to\R$ by $\Ftew(t)=\lambda t^{4/3}$ and 
$\ftew=\Ftew'$. Then
\[
\re^\Ftew(\OpH)(\re^{-\Ftew}\,\cdot\,)=\dt^2-\OpAw-\OpL
\]
where $\OpAw=\OpA-\ftew^2$ and $\OpL=2\ftew\dt+\ftew'$.
If $\psi\in C^2_0(\R_+,\hs[2])$ then 
\begin{gather*}
\norm{\OpL\psi}_{\hs}^2
=4\ftew^2\norm{\dt\psi}_{\hs}^2+(\ftew^2)'\dt\norm{\psi}_{\hs}^2+(\ftew')^2\norm{\psi}_{\hs}^2,\\
2\Real\ipd{\OpL\psi}{\dt^2\psi}_{\hs}
=2\ftew\dt\norm{\dt\psi}_{\hs}^2+\ftew'\bigl(\dt^2\norm{\psi}_{\hs}^2-2\norm{\dt\psi}_{\hs}^2\bigr)
\end{gather*}
and
\begin{align*}
2\Real\ipd{\OpL\psi}{\OpAw\psi}_{\hs}
&=2\ftew\dt\ipd{\psi}{\OpA\psi}_{\hs}+2\ftew'\ipd{\psi}{\OpA\psi}_{\hs}
-2\ftew^3\dt\norm{\psi}_{\hs}^2-2\ftew^2\ftew'\norm{\psi}_{\hs}^2\\
&=2\dt\bigl(\ftew\ipd{\psi}{\OpA\psi}_{\hs}\bigr)-2\dt\bigl(\ftew^3\norm{\psi}_{\hs}^2\bigr)
+\tfrac43(\ftew^3)'\norm{\psi}_{\hs}^2.
\end{align*}
Integration then leads to
\begin{align*}
&\int_0^\infty\bignorm{\re^\Ftew(\OpH)(\re^{-\Ftew}\psi)}_{\hs}^2
\ge\int_0^\infty\!\bigl(\norm{\OpL\psi}_{\hs}^2-2\Real\ipd{\OpL\psi}{\dt^2\psi}_{\hs}+2\Real\ipd{\OpL\psi}{\OpAw\psi}_{\hs}\bigr)\\
&\qquad=\int_0^\infty4(\ftew^2+\ftew')\norm{\dt\psi}_{\hs}^2
+\int_0^\infty\Bigl(\frac43(\ftew^3)'-(\ftew')^2-2\ftew\ftew''-\ftew'''\Bigr)\norm{\psi}_{\hs}^2.
\end{align*}
However $\ftew'\ge0$, $(\ftew^3)'=(4\lambda/3)^3\ge3\lambda^3/4$ and
\[
-\bigr((\ftew')^2+2\ftew\ftew''+\ftew'''\bigr)(t)
=\frac8{81}\lambda t^{-4/3}(6\lambda-5t^{-4/3}),
\]
which is positive when $\lambda\ge t^{-4/3}$. 
Taking $\psi=\re^{\Ftew}\phi$ now completes the result.
\end{proof}

\begin{proof}[Proof of proposition \ref{prop:carlest}]
Let $\proj{-}$ and $\proj{+}$ denote the orthogonal spectral projections of $\OpA$ on $\hs$ 
corresponding to the intervals $[-\alpha,\glb^2]$ and $[\gub^2,+\infty)$ respectively.
Note that $\proj{-}+\proj{+}=I$.
Set $\psi=(\OpH)\phi$.
Denote the corresponding projections by 
$\phi_\pm=\proj{\pm}\phi$ and $\psi_\pm=\proj{\pm}\psi$;
in particular, $\psi_\pm=(\OpH)\phi_\pm$. 
Introduce the operator $\OpN=(\OpA\proj{+})^{1/2}$; this commutes with $\OpA$, $\proj{\pm}$ and $\dt$,
and defines bounded maps $\hs[2]\to\hs[1]$ and $\hs[1]\to\hs$.

To move from a second order equation to a system of first order ones 
we introduce spaces $\HS[j]=\hs[j]\otimes\C^2$ for $j=0,1,2$, and put $\HS=\HS[0]$.
Setting
\[
\Qroj{0}=\begin{pmatrix}\proj{-}&0\\0&\proj{-}\end{pmatrix},\quad
\Qroj{1}=\begin{pmatrix}\proj{+}&0\\0&0\end{pmatrix}
\quad\text{and}\quad
\Qroj{2}=\begin{pmatrix}0&0\\0&\proj{+}\end{pmatrix}
\]
gives orthogonal projections on $\HS$ with $\Qroj{0}+\Qroj{1}+\Qroj{2}=\mathbf{I}$. Let
\[
\bPhi=\re^{\gcen t}\begin{pmatrix}\dt\phi+(\glb\proj{-}+\OpN)\phi\\
\dt\phi-(\glb\proj{-}+\OpN)\phi\end{pmatrix}
\quad\text{and}\quad
\bPsi=\re^{\gcen t}\begin{pmatrix}\psi\\\psi\end{pmatrix}.
\]
For $j=0,1,2$
denote the corresponding projections by
$\bPhi_j=\Qroj{j}\bPhi$ and $\bPsi_j=\Qroj{j}\bPsi$.
Then
\[
\bPhi_{0}=\re^{\gcen t}\begin{pmatrix}(\dt+\glb)\phi_{-}\\(\dt-\glb)\phi_{-}\end{pmatrix},\quad
\bPhi_{1}=\re^{\gcen t}\begin{pmatrix}(\dt+\OpN)\phi_{+}\\0\end{pmatrix}
\]
and
\[
\bPhi_2=\re^{\gcen t}\begin{pmatrix}0\\(\dt-\OpN)\phi_{+}\end{pmatrix}.
\]
Now $(\dt\pm\glb)\phi_{-}\in C^1_0(\R_+,\hs[2])$ 
and $(\dt\pm\OpN)\phi_{+}\in C^1_0(\R_+,\hs[1])$ so
$\bPhi_0\in C^1_0(\R_+,\HS[2])$ and $\bPhi_1,\bPhi_2\in C^1_0(\R_+,\HS[1])$, 
while $\bPsi_0,\bPsi_1,\bPsi_2\in C^0_0(\R_+,\HS)$.
We also have
\[
\dt(\dt\pm\glb)\phi_{-}
=(\OpA\pm\glb\dt)\phi_{-}+\psi_{-}
\quad\text{and}\quad
\dt(\dt\pm\OpN)\phi_{+}
=\pm\OpN(\dt\pm\OpN)\phi_{+}+\psi_{+}.
\]
It follows that $\dt\bPhi_j=\OpBM{j}\bPhi_j+\bPsi_j$ for $j=0,1,2$, where 
\begin{gather*}
\OpBM{0}=\frac1{2\glb}\begin{pmatrix}\OpA+\glb^2&-\OpA+\glb^2\\\OpA-\glb^2&-\OpA-\glb^2\end{pmatrix}\Qroj{0}+\gcen\Qroj{0},\\[5pt]
\OpBM{1}=\begin{pmatrix}\OpN&0\\0&0\end{pmatrix}+\gcen\Qroj{1}
\quad\text{and}\quad
\OpBM{2}=\begin{pmatrix}0&0\\0&-\OpN\end{pmatrix}+\gcen\Qroj{2}.
\end{gather*}
Therefore
\begin{align}
\dt\norm{\bPhi_j}_{\HS}^2
&=\ipd{\dt\bPhi_j}{\bPhi_j}_{\HS}+\ipd{\bPhi_j}{\dt\bPhi_j}_{\HS}
\nonumber\\
\label{eq:diffnorm}
&=\bigipd{\bPhi_j}{(\OpBM{j}^*+\OpBM{j})\bPhi_j}_{\HS}
+2\Real\ipd{\bPsi_j}{\bPhi_j}_{\HS}.
\end{align}

Now $-3\glb\proj{-}\le-\alpha\glb^{-1}\proj{-}\le\glb^{-1}\OpA\proj{-}\le\glb\proj{-}$ 
and $\OpN\ge\gub\proj{+}$ so
\begin{align*}
\OpBM{0}^*+\OpBM{0}
&=\begin{pmatrix}(\glb^{-1}\OpA+\glb+2\gcen)\proj{-}&0\\0&(-\glb^{-1}\OpA-\glb+2\gcen)\proj{-}\end{pmatrix}\\
&\ge2(\gcen-\glb)\begin{pmatrix}\proj{-}&0\\0&\proj{-}\end{pmatrix}
=\glen\Qroj{0},
\end{align*}
while
\[
\OpBM{1}^*+\OpBM{1}=\begin{pmatrix}2\OpN+2\gcen\proj{+}&0\\0&0\end{pmatrix}
\ge2(\gub+\gcen)\begin{pmatrix}\proj{+}&0\\0&0\end{pmatrix}
\ge\glen\Qroj{1}.
\]
For $j=0,1$ \eqref{eq:diffnorm} then leads to
\[
\dt\norm{\bPhi_j}_{\HS}^2
\ge\glen\norm{\bPhi_j}_{\HS}^2-2\norm{\bPsi_j}_{\HS}\norm{\bPhi_j}_{\HS}
\ge\frac{\glen}2\norm{\bPhi_j}_{\HS}^2-\frac2{\glen}\norm{\bPsi_j}_{\HS}^2.
\]
Since $\norm{\bPhi_j}_{\HS}^2\in C^1_0(\R_+)$ we can now integrate this inequality to get
\begin{equation}
\label{eq:TU1ineq}
\frac{\glen^2}4\int_0^\infty\norm{\bPhi_j}_{\HS}^2\le\int_0^\infty\norm{\bPsi_j}_{\HS}^2
\end{equation}
for $j=0,1$.
A simpler version of the above argument gives
\[
\OpBM{2}^*+\OpBM{2}=\begin{pmatrix}0&0\\0&-2\OpN+2\gcen\proj{+}\end{pmatrix}
\le2(\gcen-\gub)\begin{pmatrix}0&0\\0&\proj{+}\end{pmatrix}
=-\glen\Qroj{2},
\]
so
\[
\dt\norm{\bPhi_2}_{\HS}^2
\le-\glen\norm{\bPhi_2}_{\HS}^2+2\norm{\bPsi_2}_{\HS}\norm{\bPhi_2}_{\HS}
\le-\frac{\glen}2\norm{\bPhi_2}_{\HS}^2+\frac2{\glen}\norm{\bPsi_2}_{\HS}^2,
\]
and hence \eqref{eq:TU1ineq} for $j=2$.
However
\begin{align*}
&\norm{\bPhi_0}_{\HS}^2+\norm{\bPhi_1}_{\HS}^2+\norm{\bPhi_2}_{\HS}^2\\
&\qquad{}=\re^{2\gcen t}\bigl(\bignorm{(\dt+\glb)\phi_-}_{\hs}^2+\bignorm{(\dt-\glb)\phi_-}_{\hs}^2\\
&\qquad\qquad{}+\bignorm{(\dt+\OpN)\phi_+}_{\hs}^2+\bignorm{(\dt-\OpN)\phi_+}_{\hs}^2\bigr)\\
&\qquad{}=2\re^{2\gcen t}\bigl(\norm{\dt\phi_-}_{\hs}^2+\glb^2\norm{\phi_-}_{\hs}^2+
\norm{\dt\phi_+}_{\hs}^2+\norm{\OpN\phi_+}_{\hs}^2\bigr)\\
&\qquad{}\ge2\re^{2\gcen t}\bigl(\norm{\dt\phi_-}_{\hs}^2+\glb^2\norm{\phi_-}_{\hs}^2+
\norm{\dt\phi_+}_{\hs}^2+\gub^2\norm{\phi_+}_{\hs}^2\bigr)\\
&\qquad{}\ge2\re^{2\gcen t}\bigl(\norm{\dt\phi}_{\hs}^2+\glb^2\norm{\phi}_{\hs}^2\bigr),
\end{align*}
while
\[
\norm{\bPsi_0}_{\HS}^2+\norm{\bPsi_1}_{\HS}^2+\norm{\bPsi_2}_{\HS}^2
=2\re^{2\gcen t}\bigl(\norm{\psi_-}_{\hs}^2+\norm{\psi_+}_{\hs}^2\bigr)
=2\re^{2\gcen t}\norm{\psi}_{\hs}^2.
\]
These can be combined with \eqref{eq:TU1ineq} for $j=0,1,2$ to complete the result.
\end{proof}

\section*{Acknowledgements}
The author wishes to acknowledge the hospitality of the Isaac Newton Institute for Mathematical Sciences in
Cambridge, UK, where this work was initiated during the programme \emph{Periodic and Ergodic Spectral Problems}.
The author also wishes to thank the referee for several useful comments and suggestions.

\label{lastpage}
\end{document}